\theoremstyle{plain}
\newtheorem{theorem}{Theorem}[section]
\theoremstyle{plain}
\newtheorem{lemma}{Lemma}[section]
\newtheorem{proposition}{Proposition}[section]
\newtheorem{definition}{Definition}[section]
\newtheorem{remark}{Remark}[section]
\renewcommand{\>}{\right\rangle}
\newcommand{\eps}{\varepsilon}
\newcommand{\To}{\longrightarrow}
\newcommand{\be} {\begin{equation}}
\newcommand{\ee} {\end{equation}}
\newcommand{\bea} {\begin{eqnarray}}
\newcommand{\eea} {\end{eqnarray}}
\newcommand{\Bea} {\begin{eqnarray*}}
	\newcommand{\Eea} {\end{eqnarray*}}
\newcommand{\al} {\alpha}
\newcommand{\ba} {\beta}
\newcommand{\ga} {\gamma}
\newcommand{\Ga} {\Gamma}
\newcommand{\Om} {\Omega}
\newcommand{\De} {\Delta}
\newcommand{\la} {\lambda}
\newcommand{\nequiv} {\not\equiv}
\newcommand{\no} {\nonumber}
\newcommand{\noi} {\noindent}
\newcommand{\lab} {\label}
\newcommand{\R}{\mathbb R}
\newcommand{\Rn}{\mathbb R^N}
\newcommand{\deb}{\rightharpoonup}
\newcommand{\Hs}{\dot{H}^s(\mathbb{R}^{N})}
\newcommand{\Iaf}{I_{a,f}(u)}
\newcommand{\hms}{(\dot{H}^{s})'}
\newcommand{\authorfootnotes}{\renewcommand\thefootnote{\@fnsymbol\c@footnote}}%
\numberwithin{equation}{section} \allowdisplaybreaks
\begin{document}
        \title[Multiplicity of positive solutions]{On multiplicity of positive solutions\\for nonlocal equations with critical nonlinearity}

\date{}

\author[Mousomi Bhakta]{Mousomi Bhakta\textsuperscript{1}}
\address{\textsuperscript{1}Department of Mathematics, Indian Institute of Science Education and Research, Dr. Homi Bhaba Road, Pune-411008, India}
\email{mousomi@iiserpune.ac.in}

\author[Patrizia Pucci]{Patrizia Pucci\textsuperscript{2}}
\address{\textsuperscript{2}Dipartimento di Matematica e Informatica, Universit\`a degli Studi di Perugia --
Via Vanvitelli 1, I-06123 Perugia, Italy}
\email{patrizia.pucci@unipg.it}

\keywords{Nonlocal equations,  fractional Laplacian, Palais-Smale decomposition, energy estimate, positive solutions, min-max method.}

\begin{abstract}
This paper deals with existence and multiplicity of positive solutions to the following class of nonlocal equations with critical nonlinearity:
\begin{equation}
  \tag{$\mathcal E$}
\left\{\begin{aligned}
		(-\Delta)^s u &= a(x) |u|^{2^*_s-2}u+f(x)\;\;\text{in}\;\mathbb{R}^{N},\\
		u &\in \dot{H}^s(\mathbb{R}^{N}),
		 \end{aligned}
  \right.
\end{equation}
where $s \in (0,1)$, $N>2s$, $2_s^*:=\frac{2N}{N-2s}$, $0< a\in L^\infty(\mathbb{R}^{N})$ and $f$ is a nonnegative nontrivial functional in the dual space of $\dot{H}^s$ i.e., $\prescript{}{(\dot{H}^{s})'}{\langle}f,u{\rangle}_{\dot{H}^s}\geq 0$, whenever $u$ is a nonnegative function  in $\dot{H}^s$. We prove existence of a positive solution whose energy is negative. Further, under the additional assumption that $a$ is a continuous function, $a(x)\geq 1$ in $\mathbb{R}^{N}$, $a(x)\to 1$ as $|x|\to\infty$ and $\|f\|_{\dot{H}^s(\mathbb{R}^{N})'}$ is small enough (but $f\not\equiv 0$), we establish existence of at least two positive solutions to ($\mathcal E$).
\medskip

\noindent
\emph{\bf 2010 MSC:} 35R11,  35A15, 35B33, 35J60
\end{abstract}

\maketitle

\section{Introduction}
	In this article we study existence and multiplicity of positive
solutions to the following fractional elliptic equation in $\Rn$
\begin{equation}
  \tag{$\mathcal E$}\label{MAT1}
\left\{\begin{aligned}
		&(-\Delta)^s u = a(x) |u|^{2^*_s-2}u+f(x)\;\;\text{in}\;\mathbb{R}^{N},\\
		&u >0 \quad\text{in}\quad\mathbb{R}^{N},\qquad
		u \in \dot{H}^{s}{(\mathbb{R}^{N})},
		 \end{aligned}
  \right.
\end{equation}
		 where
		 $s \in (0,1)$ is fixed parameter, $N>2s$,\, $2^*_s:=\frac{2N}{N-2s}$,\, $0< a\in L^\infty(\Rn)$,\, $a(x)\to 1$ as $|x|\to\infty$
		 and $f\not\equiv 0$ is a nonnegative functional in the dual space of $\dot{H}^s(\Rn)$. Here $(-\De)^s$ denotes the  fractional Laplace operator which can be defined for the Schwartz class functions $\mathcal{S}(\Rn)$  as follows
\begin{equation} \label{De-u}
  \left(-\Delta\right)^su(x): = c_{N,s}
\, \text{P.V.} \int_{\Rn}\frac{u(x)-u(y)}{|x-y|^{N+2s}} \, {\rm d}y, \quad c_{N,s}= \frac{4^s\Ga(N/2+ s)}{\pi^{N/2}|\Ga(-s)|}.
\end{equation}
Let
$$\dot{H}^s(R^{N}): =\bigg\{u\in L^{2^*_s}(\R^N) \; : \; \iint_{\mathbb{R}^{2N}}\frac{|u(x)-u(y)|^2}{|x-y|^{N+2s}}\,{\rm d}x\,{\rm d}y<\infty\bigg\},$$
be the homogeneous fractional Sobolev space, endowed 	with the inner product
$\langle\cdot,\cdot\rangle_{\dot{H}^s}$ and corresponding Gagliardo norm
	$$\|u\|_{\dot{H}^{s}(\R{^N})}:=\left( \iint_{\mathbb{R}^{2N}} \frac{|u(x)-u(y)|^2}{|x-y|^{N+2s}}\,{\rm d}x\,{\rm d}y\right)^{1/2}.$$
Clearly, $u\in\dot{H}^s(\Rn)$ implies $u\in L^p_{\text{\scriptsize{\rm loc}}}(\Rn)$ for any $p\in[1,2^*_s]$.	
	\begin{definition}
		The function $u \in \dot{H}^s(\Rn)$ is said to be a positive weak solution of \eqref{MAT1} if $u>0$ in $\Rn$ and for every $\phi\in\dot{H}^s(\Rn)$ we have,
		$$
		\iint_{\R^{2N}}\frac{(u(x) - u(y))(\phi(x)-\phi(y))}{|x-y|^{N+2s}}\,{\rm d}x\,{\rm d}y  = \int_{\R{^N}}a(x)u^{2^*_s-1}\phi \, {\rm d}x+ \prescript{}{(\dot{H}^s)'}{\langle}f,\phi{\rangle}_{\dot{H}^s},
		$$
		where $\prescript{}{(\dot{H}^s)'}{\langle}\cdot,\cdot{\rangle}_{\dot{H}^s}$ denotes the duality bracket between the dual space $\dot{H}^s(\Rn)'$
of $\dot{H}^s(\Rn)$ and $\dot{H}^s(\Rn)$
itself.
	\end{definition}
	
\medskip

Under the stated assumptions
equation \eqref{MAT1} can be considered as a perturbation problem of the homogeneous equation:
		\begin{equation}\label{AT0.8}
		\begin{cases}
		&(-\Delta)^s w = w^{2^*_s-1}\;\;\text{in}\;\mathbb{R}^{N},\\
		&w>0 \;\;\text{in}\;\mathbb{R}^{N},\qquad
		w \in \dot{H}^{s}{(\mathbb{R}^{N})}.
		\end{cases}	
		\end{equation}
		
In the celebrated paper \cite{CLO}  Chen, Li and Ou   proved that \eqref{AT0.8} has a unique positive solution $W$ (up to  translations and dilations). Indeed, any positive solution of~\eqref{AT0.8}  is radially symmetric, with respect to some point $x_0\in\Rn$, strictly decreasing in $r=|x-x_0|$, of class $C^{\infty}(\Rn)$ and so of the explicit parametric form
		\be\lab{9-7-1}
		W(x)= c_{N,s}\bigg(\frac{\la}{\la^2+|x-x_0|^2}\bigg)^\frac{N-2s}{2},
		 \ee
		for some $\la>0$.

	 \medskip
	
The main question in this paper is whether positive solutions can still survive for the perturbed equation \eqref{MAT1}.

When the domain is a bounded subset of $\Rn$, in a pioneering work, Tarantello \cite{T} proved existence of two positive solutions for the following nonhomogeneous problem
\be\lab{1-12-1}-\De u=|u|^\frac{4}{N-2}u+f \mbox{ in }\, \Omega, \quad u=0 \mbox{ on }\, \partial\Omega,\ee where $0\leq f\in H^{-1}(\Om)$ satisfies suitable condition. In \cite{CDPM, M} the authors studied existence of sign changing solutions of \eqref{1-12-1}. In the nonlocal case, when the domain is a bounded subset of $\Rn$, existence of positive solution of $\eqref{MAT1}$ in $\Om$ with Dirichlet boundary condition has been proved in \cite{SZY}. Existence of sign changing solutions of
$$(-\De)^su=|u|^\frac{4s}{N-2s}u+\eps f \mbox{ in }\ \Omega, \quad u=0\ \mbox{ in }\, \Rn\setminus\Om,$$  where $f\geq 0, f\in L^\infty(\Om)$ has been studied in \cite{AT} and existence of two positive solutions have been established in \cite{WZ} when $f$ is a continuous function with compact support in $\Om$.

To the best of our knowledge, so far there has been no papers in the literature, where existence and multiplicity of positive solutions of fractional Laplace equations, with the critical exponents in $\Rn$, have  been established in the non homogeneous case $f(x)\neq 0$. The results in this paper are new even in the local case $s=1$, but we leave the obvious changes, when $s=1$, to the interested reader.
\smallskip

{From now on} we assume that $f$ satisfies the following condition

\begin{enumerate}
\item[${\bf (F)}$] {\it $f\not\equiv 0$ is a nonnegative functional in the dual space $\dot{H}^s(\Rn)'$ of $\dot{H}^s(\Rn)$}.
\end{enumerate}

\noindent Let us state the main results.

\begin{theorem}\lab{th:2}
Assume $0<a\in L^\infty(\Rn)$ and  ${\bf (F)}$ is satisfied. There exists $d>0$ such that if
$\|f\|_{\hms}\leq d$, then equation \eqref{MAT1} admits a positive solution whose energy is negative.
\end{theorem}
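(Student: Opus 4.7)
The plan is to obtain a positive solution of \eqref{MAT1} as a local minimum of the truncated energy functional
\begin{equation*}
I(u)=\frac{1}{2}\|u\|_{\dot{H}^s}^2-\frac{1}{2_s^*}\int_{\R^N}a(x)u_+^{2_s^*}\,dx-\prescript{}{(\dot{H}^s)'}{\langle}f,u{\rangle}_{\dot{H}^s}
\end{equation*}
on a suitably chosen closed ball $\overline{B_\rho}\subset\dot{H}^s(\R^N)$, in the spirit of Tarantello's argument \cite{T}. Replacing $|u|^{2_s^*-2}u$ by $u_+^{2_s^*-1}$ does not alter the equation satisfied by a nonnegative critical point and makes the positivity argument easier at the end.

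Combining the fractional Sobolev inequality with the bound $|\langle f,u\rangle|\le \|f\|_{\hms}\|u\|_{\dot{H}^s}$, one obtains the scalar estimate $I(u)\ge g(\|u\|_{\dot{H}^s})$, where
\begin{equation*}
g(t):=\tfrac{1}{2} t^2-\tfrac{\|a\|_\infty}{2_s^*\,S^{2_s^*/2}}\,t^{2_s^*}-\|f\|_{\hms}\,t.
\end{equation*}
The polynomial $\tfrac{1}{2} t^2-c_0 t^{2_s^*}$ attains a strictly positive maximum at a unique positive point; hence, if $d>0$ is small enough and $\|f\|_{\hms}\le d$, then $g$ possesses two positive zeros $0<t_0<\bar t$, a local minimum at some $t_1\in (0,t_0)$ with $g(t_1)<0$, and $g>0$ on $(t_0,\bar t)$. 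Fix $\rho\in(t_0,\bar t)$ and set $c:=\inf_{u\in\overline{B_\rho}}I(u)$. Since $f\not\equiv 0$, there is $v\in\dot{H}^s(\R^N)$ with $\langle f,v\rangle>0$; for $t>0$ small, $I(tv)\sim -t\langle f,v\rangle<0$ while $\|tv\|_{\dot{H}^s}<\rho$, so $c<0$. On $\partial B_\rho$ one has $I\ge g(\rho)>0$, hence every minimizing sequence remains in the open ball.

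Ekeland's variational principle then yields a bounded Palais--Smale sequence $(u_n)\subset\overline{B_\rho}$ with $I(u_n)\to c$ and $I'(u_n)\to 0$ in $\dot{H}^s(\R^N)'$. Up to a subsequence, $u_n\rightharpoonup u$ in $\dot{H}^s$ (so by weak lower semicontinuity $\|u\|_{\dot{H}^s}\le\rho$), $u_n\to u$ a.e.\ and in $L^p_{\mathrm{loc}}$ for $p<2_s^*$, and $u$ weakly solves $(-\Delta)^s u=a(x)u_+^{2_s^*-1}+f$. The delicate point is to upgrade to strong convergence, since for a critical problem on $\R^N$ a Palais--Smale sequence may lose mass through bubbling or escape to infinity. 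Quoting the standard Palais--Smale decomposition for this equation,
\begin{equation*}
c=I(u)+\sum_{i=1}^k I_\infty(w^i),
\end{equation*}
with every nontrivial bubble $w^i$ carrying energy bounded below by a fixed positive constant. Since $u\in\overline{B_\rho}$ forces $I(u)\ge c$, the equality can hold only when $k=0$; hence $u_n\to u$ strongly in $\dot{H}^s(\R^N)$, and $I(u)=c<0$, so $u\not\equiv 0$.

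Finally, testing $I'(u)=0$ against $u_-$, using the pointwise inequality $(u(x)-u(y))(u_-(x)-u_-(y))\ge (u_-(x)-u_-(y))^2$ together with $\langle f,u_-\rangle\le 0$ (by hypothesis $\mathbf{(F)}$, since $-u_-\ge 0$) yields $\|u_-\|_{\dot{H}^s}^2\le 0$, hence $u\ge 0$. Because $f\not\equiv 0$ rules out $u\equiv 0$, the strong maximum principle for $(-\Delta)^s$ applied to $(-\Delta)^s u=a(x)u^{2_s^*-1}+f\ge 0$ delivers $u>0$ in $\R^N$. The main obstacle in the whole scheme is the compactness step: one must invoke a suitable Palais--Smale decomposition for critical fractional problems in $\R^N$ to rule out concentration and escape to infinity; the remaining ingredients are routine calculus in $\dot{H}^s(\R^N)$.
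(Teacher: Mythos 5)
Your overall geometry (small ball, $I>0$ on the boundary, $I<0$ somewhere inside) is sound, but the compactness step has a genuine gap. You invoke a Palais--Smale decomposition for the functional $I$ on $\R^N$, with bubbles solving a fractional limit equation of fixed positive energy. The decomposition the paper provides (Proposition \ref{PSP}) is proved under the standing hypothesis that $a(x)\to 1$ as $|x|\to\infty$; this assumption is essential to identify the limit equation satisfied by the escaping profiles, and it is \emph{not} among the hypotheses of Theorem \ref{th:2}, which assumes only $0<a\in L^\infty(\R^N)$. Without a limiting behaviour for $a$ at infinity you cannot assign a limit equation to bubbles that drift off, and the quantization ``each nontrivial bubble carries energy at least $\tfrac{s}{N}S^{N/2s}$'' is not available. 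So as written, the key compactness step does not apply in the generality of the theorem.

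The paper sidesteps this entirely: it shows that $I''_{a,f}(u)$ is positive definite for $\|u\|_{\dot H^s}<r_1$, hence $I_{a,f}$ is strictly convex on $B(r_1)$. A continuous convex functional on a convex set is weakly lower semicontinuous, so the infimum over $\overline{B(r_1)}$ is attained without any concentration--compactness machinery; the boundary estimate $\inf_{\|u\|=r_1}I_{a,f}>0=I_{a,f}(0)$ forces the minimizer to lie in the open ball, and strict convexity gives uniqueness and $I_{a,f}(u_0)<0$ (since $0$ is not a critical point). If you want to keep your Ekeland route, you would need to replace the Palais--Smale decomposition by a compactness argument that does not presuppose a limit for $a$, and the convexity observation is precisely the device that does this.

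Two smaller remarks. First, the pointwise inequality you use in the positivity step has the wrong sign with the paper's convention $u_-:=-\min\{u,0\}\ge 0$: the correct inequality is $(u(x)-u(y))(u_-(x)-u_-(y))\le -(u_-(x)-u_-(y))^2$, and $(F)$ gives $\langle f,u_-\rangle\ge 0$ (not $\le 0$), but these sign changes cancel and the conclusion $u_-\equiv 0$ is the same. Second, your scalar comparison function $g$ and the choice of $\rho\in(t_0,\bar t)$ agree, up to cosmetics, with the paper's choice $r_1$, so the geometric part of your argument matches the paper; only the compactness mechanism differs, and that is where the gap lies.
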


\noindent Next, under an additional hypothesis on $a$, we prove existence of at least two positive solutions.

\begin{enumerate}
\item[${\bf (A)}$] {\it $a\in  C(\Rn)\cap L^\infty(\Rn),\quad a(x)\geq 1$ for all $x\in \R^N$,
and $a(x)\rightarrow 1$ as $|x|\rightarrow\infty$.}
\end{enumerate}

\begin{theorem}\lab{th:ex-f}
Assume that  ${\bf (A)}$ and ${\bf (F)}$ are satisfied. If
$$ \|f\|_{\dot{H}^s(\Rn)'}<C_0S^{\tfrac{N}{4s}}, \quad\text{where}\quad C_0:=\bigg(\frac{4s}{N+2s}\bigg)\big((2^*_s-1)\|a\|_{L^\infty(\Rn)}
\big)^{-\frac{N-2s}{4s}},$$
then \eqref{MAT1} admits a positive solution.

In addition, if either $a\equiv 1$ or $\|a\|_{L^\infty(\Rn)}\geq \al(N,s)$, where
$\al(N,s)$ is the second zero of the function
\be\lab{12-8-3}\varphi(t):=\frac{s}{N}t^\frac{N+2s}{2s}-\frac{t^2}{2}+\frac{1}{2^*_s},\ee then \eqref{MAT1} admits at least two positive solutions.
\end{theorem}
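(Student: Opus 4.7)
The plan is to find two positive solutions as critical points of the energy functional
\[
I(u) = \frac{1}{2}\|u\|_{\Hs}^2 - \frac{1}{2^*_s}\int_{\Rn} a(x) u_+^{2^*_s}\,{\rm d}x - \prescript{}{\hms}{\langle}f,u{\rangle}_{\Hs},
\]
recalling that every nontrivial nonnegative critical point is strictly positive by the strong maximum principle for $(-\Delta)^s$ together with the assumption that $f$ is nonnegative and nontrivial.

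For the first solution I would reproduce the Ekeland/local-minimization argument of Theorem \ref{th:2}: under (A) and (F), the quantitative bound $\|f\|_{\hms}<C_0 S^{N/(4s)}$ is calibrated so that $I$ has the geometry of a local minimum near $0$ in $\Hs$, with strictly negative infimum on a suitable closed ball. A Palais--Smale sequence of almost-minimizers at a level strictly below the compactness threshold $\tfrac{s}{N}S^{N/(2s)}\|a\|_{L^\infty}^{-(N-2s)/(2s)}$ converges strongly (up to a subsequence) to a nonnegative local minimizer $u_0$ with $I(u_0)<0$.

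For the second solution I would run a mountain-pass construction around $u_0$. Since $u_0$ is a strict local minimum, there exist $r,\rho>0$ with $\inf_{\|u-u_0\|_{\Hs}=r}I(u)\ge I(u_0)+\rho$, and $I(u_0+tw)\to-\infty$ as $t\to\infty$ for any $w$ with $\int a w_+^{2^*_s}{\rm d}x>0$. The standard min-max level
\[
c:=\inf_{\gamma\in\Gamma}\sup_{t\in[0,1]}I(\gamma(t)),\qquad \Gamma=\{\gamma\in C([0,1];\Hs)\,:\,\gamma(0)=u_0,\ I(\gamma(1))<I(u_0)\},
\]
satisfies $c\ge I(u_0)+\rho$, and a PS-sequence at level $c$ exists by the usual deformation lemma. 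By a Struwe-type global-compactness decomposition for nonlocal problems with critical growth and a nonhomogeneous source, this sequence is precompact (and thus produces a critical point $u_1\neq u_0$) provided
\[
c < I(u_0)+\frac{s}{N}\,S^{N/(2s)}\,\|a\|_{L^\infty(\Rn)}^{-(N-2s)/(2s)}.
\]

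The main obstacle is verifying this strict inequality by exhibiting a path whose supremum lies below the compactness threshold. I would test along $t\mapsto u_0+tU_{\varepsilon,y}$, where $U_{\varepsilon,y}$ is a rescaled Talenti bubble of the form \eqref{9-7-1} concentrated at $y$: when $a\not\equiv 1$, $y$ is taken close to a maximum point of $a$, and when $a\equiv 1$, $y$ is sent to infinity. Using $I'(u_0)=0$ to cancel the linear-in-$t$ contribution, expanding $I(u_0+tU_{\varepsilon,y})$ as $\varepsilon\to 0^+$, and normalizing $t$ by $\|a\|_{L^\infty}^{-(N-2s)/(4s)}$, the leading $t$-profile is (up to a positive multiplicative factor) precisely the function $\varphi(t)$ of \eqref{12-8-3}. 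The hypothesis $\|a\|_{L^\infty}\geq \alpha(N,s)$ (the second zero of $\varphi$), or the case $a\equiv 1$ in which $\varphi(1)=0$, guarantees that $\sup_{t\geq 0}\varphi(t)<s/N$ with a quantitative margin; combined with the strictly negative interaction corrections of order $\varepsilon^{(N-2s)/2}$ generated by $\int a u_0 U_{\varepsilon,y}^{2^*_s-1}\,{\rm d}x$ and by $\prescript{}{\hms}{\langle}f,U_{\varepsilon,y}{\rangle}_{\Hs}$, both strictly positive thanks to $u_0>0$ and (F), this margin pushes $\sup_t I(u_0+tU_{\varepsilon,y})$ below the critical threshold. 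The technically hardest step is the sharp asymptotic control, in the nonlocal setting, of the bubble--background and bubble--source integrals, which is what actually forces the quantitative hypothesis on $\|a\|_{L^\infty}$.
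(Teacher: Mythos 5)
Your outline captures the large-scale structure correctly: two critical points of the $u_+$-functional $I_{a,f}$, the first a local minimizer, the second via a mountain pass whose level is forced below the compactness threshold supplied by Proposition~\ref{PSP}, with positivity of $u_0$ feeding a strictly negative interaction. But both the route to the threshold estimate and the account of where the hypothesis on $\|a\|_{L^\infty(\Rn)}$ comes from differ from the paper, and the latter contains a genuine error.

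On the route: for the first solution you run the ball/convexity argument of Theorem~\ref{th:2}, whereas the paper (Proposition~\ref{p:30-7-1}) instead partitions $\Hs$ via $g(u)=\|u\|_{\Hs}^2-(2^*_s-1)\|a\|_{L^\infty(\Rn)}\|u\|_{L^{2^*_s}(\Rn)}^{2^*_s}$ into $U_1\cup U\cup U_2$, attains $c_0=\inf_{U_1}I_{a,f}$, and uses the sign of $g$ on each bubble $W^{r_n^j,x_n^j}$ to kill the $m\geq 1$ terms of the decomposition. The strict gap $c_0<c_1=\inf_U I_{a,f}$ obtained in Lemma~\ref{l:30-7-1} is then reused: since the mountain-pass path must cross $U$, one gets $\gamma\geq c_1>c_0$ for free. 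Your approach would have to verify the strict-local-min geometry at $u_0$ by hand. For the threshold, the paper tests along $u_0+a(\bar x_0)^{-\frac{N-2s}{4s}}w_t$ with $w_t=W(\cdot/t)$ (a \emph{spreading}, not concentrating, bubble) and uses an exact pointwise superadditivity estimate (Claim 2 of Proposition~\ref{p:31-7-2}) rather than an asymptotic expansion; this is more elementary and needs no rate of modulus of continuity of $a$, which is only assumed continuous.

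The concrete gap: $\varphi(t)=\frac{s}{N}t^{\frac{N+2s}{2s}}-\frac{t^2}{2}+\frac{1}{2^*_s}$ is \emph{not} the leading $t$-profile of $I_{a,f}(u_0+tU_{\varepsilon,y})$. That profile is $\frac{t^2}{2}\|W\|^2_{\Hs}-\frac{t^{2^*_s}}{2^*_s}a(y)\|W\|^{2^*_s}_{L^{2^*_s}(\Rn)}$, with exponents $2$ and $2^*_s$, whose supremum over $t$ is identically equal to the critical threshold $\frac{s}{N}S^{N/(2s)}a(y)^{-(N-2s)/(2s)}$; the condition on $\|a\|_{L^\infty(\Rn)}$ can never emerge from it. In the paper, $\varphi$ is a function of the \emph{value} $a(\bar x_0)$, not of the path parameter, and it quantifies whether $\sup_t I_{1,0}(a(\bar x_0)^{-\frac{N-2s}{4s}}w_t)\leq a(\bar x_0)^{-\frac{N-2s}{2s}}I_{1,0}(W)$ (Claim~4), which is where $\alpha(N,s)$ comes from. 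Second, the term $\prescript{}{\hms}{\langle}f,U_{\varepsilon,y}{\rangle}_{\dot H^s}$ you invoke as a "strictly negative interaction correction" in fact cancels: once $I'_{a,f}(u_0)=0$ is used to replace $t\langle u_0,U_{\varepsilon,y}\rangle_{\Hs}$ by $t\int a\,u_0^{2^*_s-1}U_{\varepsilon,y}\,{\rm d}x+t\,\prescript{}{\hms}{\langle}f,U_{\varepsilon,y}{\rangle}_{\dot H^s}$, the $f$-term drops out of $I_{a,f}(u_0+tU_{\varepsilon,y})-I_{a,f}(u_0)$, and the only surviving correction is $-\frac{1}{2^*_s}\int a\big[(u_0+tU)^{2^*_s}-u_0^{2^*_s}-(tU)^{2^*_s}-2^*_s\,t\,u_0^{2^*_s-1}U\big]\,{\rm d}x\leq 0$, exactly the content of the paper's Claim~2. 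Finally, sending $y\to\infty$ when $a\equiv 1$ is counterproductive: $u_0(y)\to 0$ and the only negative correction you have dies; you should keep $y$ fixed.
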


\medskip

As in the local case, the Sobolev embedding
$\dot{H}^s(\Rn) \hookrightarrow L^{2^*_s}(\Rn)$ is continuous, but
not compact. Thus the variational functional associated to~\eqref{MAT1} fails to satisfy the Palais-Smale  condition, briefly called $(PS)$
condition.  The lack of compactness becomes clear, when one looks at the special case \eqref{AT0.8}. Solutions of \eqref{AT0.8} are invariant under translation and dilation therefore, there is not compactness. Thus the standard variational technique can not be applied directly. Noncompact variational problems have attracted much attention since the late
seventies. Among them, the Yamabe \cite{Y} and the prescribed scalar curvature problems have played an important role. For those, but also for many related elliptic equations, the loss of compactness is caused by the invariant action of the conformal group, or of one of its subgroups, leading to possible spikes formation. To overcome this difficulty, the a priori knowledge of the energy range where the Palais-Smale condition holds is helpful, and sometimes suffices to construct critical points.

\medskip

Now let us briefly explain the methodology to obtain our results. In Theorem~\ref{th:2}, we establish existence of positive solution as a perturbation of $0$ via Mountain Pass theorem. To prove Theorem~\ref{th:ex-f}, we first do the Palais-Smale decomposition of the functional associated with \eqref{MAT1}. Then we decompose $\dot{H}^s(\Rn)$ into three components which are homeomorphic to the interior, boundary and the exterior of the unit ball in $\dot{H}^s(\Rn)$ respectively.  Thus, using assumption $(A)$, we prove that the energy functional associated to \eqref{MAT1} attains its infimum on one of the components which serves as our first positive solution.  The second positive solution is obtained via a careful analysis on the $(PS)$ sequences associated to the energy functional and we construct a min--max critical level $\ga$, where the $(PS)$ condition holds.  That leads to the existence of second positive solution.

\vspace{2mm}		
			
This paper has been organised in the following way:
In Section 2, we prove the Palais-Smale decomposition theorem associated with the functional corresponding to \eqref{MAT1}. In Section 3, we show existence of two positive solutions of \eqref{MAT1} under the assumption $(A)$, namely
Theorem~\ref{th:ex-f}. In Section~4, we prove Theorem \ref{th:2}. Appendix~A basic properties of the Morrey spaces.

\vspace{2mm}
	
	{\bf Notation:} In this paper $\dot{H}^s(\Rn)'$ (or in short $\hms$) denotes the dual space of $\Hs$, $C$ denotes the generic constant which may vary from line to line. Moreover, $u_+:=\max\{u, 0\}$ and $u_-:=-\min\{u, 0\}$. Therefore, according to our notation $u=u_+-u_-$. Finally, $W$  denotes the unique positive solution of \eqref{AT0.8} and $S$ the best Sobolev constant.

	\section{Palais-Smale  characterization}

In this section we study the Palais-Smale sequences (in short, $(PS)$ sequences) of the functional associated to \eqref{MAT1}.

	\bea\label{EF}
			\bar I_{a,f}(u)&=& \frac{1}{2}\iint_{\R^{2N}}\frac{|u(x) - u(y)|^2}{|x-y|^{N+2s}}\,{\rm d}x\,{\rm d}y  - \frac{1}{2^*_s}\int_{\R^N}a(x)|u|^{2^*_s}\,{\rm d}x-\prescript{}{(\dot{H}^s)'}{\langle}f,u{\rangle}_{\dot{H}^s}\no\\
&=&\frac{1}{2}\|u\|^{2}_{\dot{H}^s(\R^N)}-\frac{1}{2^*_s}\int_{\R^N}a(x)|u|^{2^*_s}\,{\rm d}x -\prescript{}{(\dot{H}^s)'}{\langle}f,u{\rangle}_{\dot{H}^s}.
		\eea

We say that the sequence $(u_k)_k\subset \dot{H}^s(\R^N)$ is a $(PS)$ sequence for $\bar I_{a,f}$ at level $\ba$ if $\bar I_{a,f}(u_k)\to \ba$ and $\bar I_{a,f}'(u_k)\to 0$ in $\big(\dot{H}^s(\Rn)\big)'$. It is easy to see that the weak limit of a $(PS)$ sequence solves \eqref{MAT1} except the positivity.
%\begin{eqnarray}\lab{8-7-1}
%	(-\Delta)^s u + u &=& a(x) |u|^{p-1}u\;\;\text{in}\;\mathbb{R}^{N},\no\\
%	u &\in& H^{s}{(\mathbb{R}^{N})}.	
%	\end{eqnarray}

However the main
difficulty is that the $(PS)$ sequence may not converge strongly and hence the weak limit can be zero even if $\ba>0.$
%{\color{blue} In this case lack of compactness can occur through vanishing of the mass (in the sense of Lions)}.
 The main purpose of this section is to classify $(PS)$ sequences for
the functional $\bar I_{a,f}$. Classification of $(PS)$ sequences has been done for various problems having lack of compactness, to quote a few, we cite \cite{BCG, CF, Lions, PS-1, PS-2, Sm, S}. We establish
 a classification theorem for the $(PS)$ sequences of \eqref{EF} in the spirit of the above results.\\

{\bf Throughout this section we assume $0<a\in L^\infty(\Rn)$, $a(x)\to 1$ as $|x|\to\infty$ and $f$ is a nontrivial element of $\dot{H}^s(\Rn)'$.}		
	
	\begin{proposition}\label{PSP}
				
Let $(u_k)_k \subset \dot{H}^s(\R^N)$ be a $(PS)$ sequence for $\bar I_{a,f}$. Then there exists a subsequence (still denoted by $u_k$) for which
%the following hold :\\
there exist an integer $m\geq 0$, sequences $x_{k}^{j}$, $r_k^j>0$ for $1\leq j \leq m$, functions $\bar{u},\; w_{j}$ for $1\leq j\leq m$ such that
		\be
		(-\De)^s\bar{u} = a(x){|\bar u|}^{2^*_s-2}\bar{u}+f \quad\text{in}\quad\R^N
		\ee
		\be\begin{split}\mbox{either $x_k^j\to x^j\in\Rn$ or }|x_k^j|\to\infty,\quad r_k^j\to 0, \,\, 1\leq j\leq m.\\		 \bigg|\log\big(r^i_kr_k^{-j}\big)\bigg|+\bigg|\frac{x_k^i-x_k^j}{r_k^i}\bigg|
\To\infty\quad \mbox{for } i\neq j,\, \, 1\leq i,\, j\leq m, \end{split}\ee
		\be
		\begin{split}
			(-\De)^s w_j = a(x^j)|w_{j}|^{2^*_s-2}w_{j} \;\mbox{ in }\;\R^N\\
		w_j \nequiv 0,\	w_j \in \dot{H}^s(\R^N),
		\end{split}
		\ee
		\be
		\begin{split}
			u_k-\bigg(\bar{u} +\sum_{j=1}^{m}a(x^j)^{-\frac{N-2s}{4s}}(w_j)^{r_k^j, x_k^j}\big)\bigg) \rightarrow 0 \;\text{as}\;k\rightarrow\infty,\\
			\mbox{where } (w_j)^{r, y}:=r^{-\frac{N-2s}{2}}w_j(\frac{x-y}{r}), \lab{12-13-3}\\
			\bar I_{a,f}(u_k)\rightarrow \bar I_{a,f}(\bar{u})+\sum_{j=1}^{m}a(x^j)^{-\frac{N-2s}{2s}}\bar I_{1,0}(w_j)\;\mbox{ as }\;k\rightarrow \infty,
		\end{split}
		\ee		
		\noi where  in the case $m=0$ the above expressions hold without $w_j$, $x_{k}^{j}$ and $r_k^j$.
In addition, if $u_k\geq 0,$ then $\bar{u}\geq 0$ and $w_j\geq 0$ for all $1\leq j \leq m$. Therefore, $w_j=W$ for all $1\leq j\leq m$ due to the uniqueness up to the translation and dilation for the positive solutions of \eqref{AT0.8}.
				\end{proposition}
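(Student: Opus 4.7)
The plan is to adapt the classical Struwe-type profile decomposition to the nonlocal perturbed functional $\bar I_{a,f}$. The proof is inductive: I iteratively extract either the weak limit (first step) or a concentrating bubble from the current $(PS)$ residue, and show that the residue remains a $(PS)$ sequence for the unperturbed functional $\bar I_{a,0}$ with energy strictly reduced by the bubble's energy. Termination follows from a uniform positive lower bound $\frac{s}{N}S^{N/(2s)}$ on the energy of each nontrivial bubble, combined with the boundedness of $\bar I_{a,f}(u_k)$.

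First I verify that $(u_k)$ is bounded in $\dot H^s(\R^N)$: combining $\bar I_{a,f}(u_k)=\beta+o(1)$ with $\langle \bar I'_{a,f}(u_k),u_k\rangle=o(\|u_k\|_{\dot H^s})$ yields $\left(\frac{1}{2}-\frac{1}{2^*_s}\right)\|u_k\|^2_{\dot H^s}\le C+\|f\|_{\hms}\|u_k\|_{\dot H^s}+o(\|u_k\|_{\dot H^s})$. Up to a subsequence $u_k\rightharpoonup\bar u$ weakly in $\dot H^s$, pointwise a.e., and strongly in $L^{2^*_s}_{\mathrm{loc}}$. Testing $\bar I'_{a,f}(u_k)\to 0$ against $\varphi\in C_c^\infty(\R^N)$ one passes to the limit to obtain that $\bar u$ solves \eqref{MAT1}. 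Setting $v_k^1:=u_k-\bar u$, the fractional Brezis-Lieb identity applied to the Gagliardo seminorm and to the $a$-weighted $L^{2^*_s}$ term, together with $\langle f,v_k^1\rangle\to 0$, shows that $(v_k^1)$ is a $(PS)$ sequence for $\bar I_{a,0}$ at level $\beta-\bar I_{a,f}(\bar u)$, with $v_k^1\rightharpoonup 0$.

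If $v_k^1\to 0$ strongly, stop with $m=0$. Otherwise I locate a concentration using the Lions-type function $Q_k(\rho):=\sup_{y\in\R^N}\int_{B_\rho(y)}|v_k^1|^{2^*_s}dx$: fix a threshold $\delta$ below $\frac{s}{N}S^{N/(2s)}$ and pick $r_k^1>0$ and $x_k^1\in\R^N$ with $Q_k(r_k^1)=\delta$ attained at $x_k^1$. A vanishing/concentration dichotomy forces $r_k^1\to 0$, since otherwise the local fractional compact embedding combined with $v_k^1\rightharpoonup 0$ would contradict $Q_k(r_k^1)=\delta$. The rescaled sequence $\tilde v_k(x):=(r_k^1)^{(N-2s)/2}v_k^1(r_k^1 x+x_k^1)$ is bounded in $\dot H^s$ and admits a nontrivial weak limit, from which $w_1$ is defined as in the statement. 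Passing to the limit in the rescaled PDE and using $a(r_k^1 x+x_k^1)\to a(x^1)$ (where $x^1:=\lim x_k^1$ in the bounded case, and $a(x^1):=1$ when $|x_k^1|\to\infty$ by the hypothesis $a\to 1$) yields the limit equation for $w_1$. Subtract $a(x^1)^{-(N-2s)/(4s)}(w_1)^{r_k^1,x_k^1}$ from $v_k^1$ to form $v_k^2$; a Brezis-Lieb-type splitting adapted to concentrating profiles then shows that $v_k^2$ is again a $(PS)$ sequence for $\bar I_{a,0}$, with energy reduced by $a(x^1)^{-(N-2s)/(2s)}\bar I_{1,0}(w_1)$.

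Iteration produces $v_k^2,v_k^3,\dots$ at successively lower energies, and the total energy is bounded, so the process terminates after finitely many steps, giving the desired $m\ge 0$. The scale-center separation $|\log(r_k^i/r_k^j)|+|x_k^i-x_k^j|/r_k^i\to\infty$ for $i\neq j$ is forced by the construction: if two scales/centers remained asymptotically comparable, the later bubble would already be visible in the earlier residue, contradicting the weak-zero limit of the corresponding rescaling of $v_k^{j+1}$. The main technical obstacle is the Brezis-Lieb-type splitting in the nonlocal setting under concentration, with the $a$-weighted critical nonlinearity; carrying this out cleanly requires careful control of translations/dilations and the scale separation, and is where the bulk of the estimates sit. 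Everything else is a nonlocal transcription of the classical Struwe scheme. Finally, if $u_k\ge 0$ then $\bar u\ge 0$ and each $w_j\ge 0$ by pointwise limits, and the classification recalled in \eqref{9-7-1} identifies each such $w_j$ with the unique profile $W$ (up to the normalization absorbed in the factor $a(x^j)^{-(N-2s)/(4s)}$).
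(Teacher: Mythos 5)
Your overall strategy matches the paper's: iterative extraction of the weak limit and then of concentrating profiles, a Br\'ezis--Lieb splitting at each step for both the Gagliardo seminorm and the $a$-weighted $L^{2^*_s}$ term, energy reduction by a uniform positive amount, and termination by boundedness of the energy. Steps~1--3 and 6 of the paper are essentially what you describe, and the treatment of the sign (positivity of $\bar u$ and $w_j$ when $u_k\geq 0$, followed by identification with $W$) is the same.

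Where you diverge from the paper is the device used to locate a concentration. You use the Lions--Struwe function $Q_k(\rho)=\sup_{y}\int_{B_\rho(y)}|v_k^1|^{2^*_s}\,dx$ at a fixed level $\delta$; the paper instead uses the homogeneous Morrey norm $\|v_k\|_{\mathcal{L}^{2,N-2s}(\Rn)}$ together with the Palatucci--Pisante improved Sobolev inequality (Lemma~\ref{l:12-13-1}). This is not merely cosmetic, and it is exactly where your argument has a gap. You write that ``otherwise the local fractional compact embedding combined with $v_k^1\rightharpoonup 0$ would contradict $Q_k(r_k^1)=\delta$,'' and likewise assert without argument that the rescaled sequence ``admits a nontrivial weak limit.'' But $\dot H^s(\Rn)\hookrightarrow L^{2^*_s}_{\mathrm{loc}}(\Rn)$ is \emph{not} compact: weak convergence $v_k^1\rightharpoonup 0$ in $\dot H^s$ does \emph{not} imply $\int_{B_\rho(y)}|v_k^1|^{2^*_s}\to 0$, precisely because bubbles concentrate in that norm (a bubble $W^{\eps_k,y_k}$ with $\eps_k\to0$ has $v_k\rightharpoonup 0$ but $Q_k(\rho)\to S^{N/(2s)}$ for fixed $\rho$). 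So the $L^{2^*_s}$ concentration function alone neither forces $r_k^1\to 0$ nor produces a nontrivial weak limit $\tilde v$; in Struwe's original treatment one must bring in the $(PS)$ equation at this point, which you have not done. The paper's Morrey choice sidesteps this: the lower bound \eqref{12-10-6} is an $L^2$-average on balls, and the compact embedding $\dot H^s\hookrightarrow L^2_{\mathrm{loc}}$ \emph{is} available, so after rescaling one gets $\int_{B(0,1)}|\tilde v|^2\geq\bar C>0$ directly. (Also, as a minor dimensional slip, your threshold $\delta$ should be compared with the $L^{2^*_s}$ mass scale $S^{N/(2s)}$, not with the energy level $\tfrac{s}{N}S^{N/(2s)}$.) To make your version rigorous you would either need to supply the Struwe-type argument using $\bar I'_{a,0}(v_k^1)\to 0$ to upgrade the extracted weak limit, or switch to the Morrey-space extraction as in the paper. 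The remaining technical block you flag (the Br\'ezis--Lieb splitting under concentration in the nonlocal, $a$-weighted setting) and the scale-separation claim (which the paper just cites from \cite{PS-2}) are fair to defer, but the extraction step as written does not close.
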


\begin{remark}\lab{r:psp}
{\rm From Proposition \ref{PSP}, we see that if $(u_k)_k $ is any nonnegative $(PS)$ sequence for $\bar I_{a,f}$ at level $c$, then $(u_k)_k $ satisfies the $(PS)$ condition  if $c$ can not be decomposed as $c=\bar I_{a,f}(\bar{u})+\sum_{j=1}^ma(x^j)^{-\frac{N-2s}{2}}\bar I_{1,0}(W)$, where $m\geq 1$ and $W$ is the unique positive radial solution of \eqref{AT0.8}.}
\end{remark}

\medskip

Before starting the proof of this proposition, we prove an auxiliary lemma

\begin{lemma}\label{L2}
			Let $(\phi_k)_k$ weakly converge to $\phi$ in $\Hs$ and a.e. in $\R^N$, then
			$$a|\phi_k|^{2^*_s-2}\phi_k - a|\phi|^{2^*_s-2}\phi \longrightarrow 0 \quad \text{in}\quad \dot{H}^s(\R^N)'.$$
		\end{lemma}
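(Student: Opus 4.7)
Plan. Set $G_k := a(|\phi_k|^{2^*_s-2}\phi_k - |\phi|^{2^*_s-2}\phi)$. By definition of the dual norm,
\[
\|G_k\|_{\dot H^s(\mathbb{R}^N)'} \;=\; \sup_{\|\psi\|_{\dot H^s}\le 1}\Bigl|\int_{\mathbb{R}^N}G_k\,\psi\,dx\Bigr|.
\]
Applying Hölder's inequality with conjugate exponents $(2^*_s)'$ and $2^*_s$, together with the Sobolev embedding $\dot H^s(\mathbb{R}^N)\hookrightarrow L^{2^*_s}(\mathbb{R}^N)$ (with best constant $S^{-1/2}$) and the fact that $a\in L^\infty(\mathbb{R}^N)$, yields the uniform estimate
\[
\|G_k\|_{\dot H^s(\mathbb{R}^N)'}\;\le\; S^{-1/2}\,\|a\|_{L^\infty}\,\bigl\||\phi_k|^{2^*_s-2}\phi_k-|\phi|^{2^*_s-2}\phi\bigr\|_{L^{(2^*_s)'}(\mathbb{R}^N)}.
\]
Hence it is enough to prove strong convergence to $0$ of the nonlinearity in $L^{(2^*_s)'}(\mathbb{R}^N)$.

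The integrand on the right converges pointwise a.e.\ to $0$, by continuity of the superposition $t\mapsto|t|^{2^*_s-2}t$ together with the hypothesis $\phi_k\to\phi$ a.e. Next, the elementary inequality $||s|^{p-2}s-|t|^{p-2}t|^{p'}\le C(|s|^p+|t|^p)$ with $p=2^*_s$ and $p'=(2^*_s)'$, combined with the $L^{2^*_s}$-bound on $(\phi_k)$ (which follows from $\phi_k\rightharpoonup\phi$ in $\dot H^s$ and the Sobolev embedding), bounds the $(2^*_s)'$-power of the integrand pointwise by $C(|\phi_k|^{2^*_s}+|\phi|^{2^*_s})$. Vitali's convergence theorem then yields the required $L^{(2^*_s)'}$-convergence provided the majorant family $\{|\phi_k|^{2^*_s}\}$ is equi-integrable, i.e.\ admits no mass concentration on small sets and no mass escape at infinity.

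The main obstacle is precisely this equi-integrability at the critical Sobolev exponent. On bounded domains it can be obtained from the compact embedding $\dot H^s(\mathbb{R}^N)\hookrightarrow L^q_{\mathrm{loc}}(\mathbb{R}^N)$ for $q<2^*_s$, which together with the a.e.\ convergence upgrades weak convergence to strong subcritical local convergence of $\phi_k$; this rules out concentration of $|\phi_k|^{2^*_s}$ at any point, since the limit $|\phi|^{2^*_s}$ carries all the a.e.\ pointwise mass. The delicate step is controlling the tail: one must show that no $L^{2^*_s}$-mass of $\phi_k$ escapes to infinity. The plan is to introduce a smooth cut-off $\eta_R(x)=\eta(|x|/R)$ and, combining the Brezis-Lieb identity
\[
\|\phi_k\|_{L^{2^*_s}}^{2^*_s}=\|\phi\|_{L^{2^*_s}}^{2^*_s}+\|\phi_k-\phi\|_{L^{2^*_s}}^{2^*_s}+o(1)
\]
with the absolute continuity of $\int|\phi|^{2^*_s}\,dx$, deduce that $\int_{|x|>R}|\phi_k|^{2^*_s}\,dx$ can be made arbitrarily small uniformly in $k$ as $R\to\infty$. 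Closing this tail estimate rigorously is the crux of the argument; once it is in place the local piece and the tail piece together close the Vitali argument and deliver the claimed strong convergence in $\dot H^s(\mathbb{R}^N)'$.
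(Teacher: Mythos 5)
Your opening reduction -- duality, H\"older and the Sobolev embedding, so that it suffices to prove strong convergence of the nonlinearity in $L^{2N/(N+2s)}(\R^N)$ -- is exactly the paper's first step, but the way you then try to run Vitali's theorem has a genuine gap. With the crude majorant $C\big(|\phi_k|^{2^*_s}+|\phi|^{2^*_s}\big)$ you need equi-integrability (no concentration and no escape to infinity) of the family $\{|\phi_k|^{2^*_s}\}$, and this simply does not follow from the hypotheses: the rescaled bubbles $\phi_k(x)=k^{(N-2s)/2}W(kx)$, or translated bumps $W(\cdot-x_k)$ with $|x_k|\to\infty$, converge weakly in $\Hs$ and a.e.\ to $0$, yet their critical mass concentrates at a point or runs off to infinity. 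For the same reason your local step is also not sound: strong convergence in $L^q_{\rm loc}$ for $q<2^*_s$ does not prevent concentration of the $L^{2^*_s}$-mass at a point (that is precisely what a bubble does), and the Br\'ezis--Lieb identity only splits the total mass $\|\phi_k\|_{L^{2^*_s}}^{2^*_s}$; it gives no uniform-in-$k$ smallness of $\int_{|x|>R}|\phi_k|^{2^*_s}\,dx$. So the ``crux'' you leave open cannot be closed along this route: already for $a\equiv 1$ (admissible under the standing assumptions of the section), testing $|\phi_k|^{2^*_s-2}\phi_k$ against the bubble $\phi_k$ itself gives $\int_{\R^N}\phi_k^{2^*_s}\,dx=\int_{\R^N}W^{2^*_s}\,dx>0$ with $\|\phi_k\|_{\Hs}$ constant, so any argument that effectively forces strong $L^{2^*_s}$-type compactness of $(\phi_k)_k$ is doomed in this generality.

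The paper's proof avoids your obstruction by a different pointwise estimate: writing $\psi_k=\phi_k-\phi$, it invokes \eqref{10-7-2}, which puts an arbitrarily small factor $\eps$ on the $k$-dependent term $|\psi_k|^{2^*_s}$ and the constant $C_\eps$ on the fixed integrable function $|\phi|^{2^*_s}$; then Vitali and the tail estimate only require tightness of the single function $|\phi|^{2^*_s}\in L^1(\R^N)$ plus the uniform bound $\sup_k\|\psi_k\|_{L^{2^*_s}}^{2^*_s}<\infty$, and no equi-integrability of $|\phi_k|^{2^*_s}$ is ever needed. You should be aware, however, that \eqref{10-7-2} degenerates where $\phi$ vanishes (for $\phi\equiv0$ its left-hand side is comparable to $|\psi_k|^{2^*_s}$ while the right-hand side is only $\eps|\psi_k|^{2^*_s}$), which is consistent with the bubble example above; what the rest of the paper actually uses (Step 2 of the proof of Proposition \ref{PSP}) is only the weak statement that $\int_{\R^N}a|\phi_k|^{2^*_s-2}\phi_k\,v\,dx\to\int_{\R^N}a|\phi|^{2^*_s-2}\phi\,v\,dx$ for each fixed $v\in\Hs$, and that follows directly from the boundedness of $a|\phi_k|^{2^*_s-2}\phi_k$ in $L^{2N/(N+2s)}(\R^N)$ together with a.e.\ convergence -- a statement your estimates do establish, and which would be the safe way to repair the argument.
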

		\begin{proof}
			Defining $\psi_k$ as  $\phi_k - \phi$, we see $\psi_k \rightharpoonup 0$ in $\Hs$. In particular, $(\psi_k)_k$ is bounded in $\Hs$. Thus, up to a subsequence, $\psi_k \to 0$ in $L_{\rm loc}^{q}(\R^N) \ \mbox{for all } \ 1<q<2_{s}^*$ and $\psi_k \to 0$ a.e.
in~$\R^N$. Consequently,
			$a|\phi +\psi_k|^{2^*_s-2}(\phi + \psi_k) - a|\phi|^{2^*_s-2}\phi \rightarrow 0$ a.e..  We also observe that
			for every $\varepsilon>0, \;\text{there exists }C_\varepsilon >0$ such that
			\be\lab{10-7-2}\bigg|a|\phi + \psi_k|^{2^*_s-2}(\phi + \psi_k) - a|\phi|^{2^*_s-2}\phi \bigg|^{\tfrac{2N}{N+2s}} \leq \varepsilon |\psi_k|^{2^*_s} + C_{\varepsilon} |\phi|^{2^*_s} .\ee
Moreover, since $\psi_k \rightharpoonup 0$ in $\Hs$ implies $(\psi_k)_k$ is uniformly bounded in $L^{2^*_s}(\Rn)$ and the fact that $|\phi|^{2^*_s}\in L^1(\Rn)$,  using Vitaly's convergence theorem, it is easy to see from \eqref{10-7-2} that
$$a|\phi +\psi_k|^{2^*_s-2}(\phi + \psi_k) - a|\phi|^{2^*_s-2}\phi   \rightarrow 0\quad\mbox{in }L^{\tfrac{2N }{N+2s}}_{\rm loc}(\Rn).$$ Moreover, using \eqref{10-7-2}, we also see that given any $\eps>0$, there exists $R>0$ such that
\be\lab{10-7-3}
\int_{\Rn\setminus B(0,R)}\bigg|a|\phi + \psi_k|^{2^*_s-2}(\phi + \psi_k) - a|\phi|^{2^*_s-2}\phi \bigg|^{\tfrac{2N}{N+2s}}\, {\rm d}x<\eps.
\ee
As a result, $a|\phi +\psi_k|^{2^*_s-2}(\phi + \psi_k) - a|\phi|^{2^*_s-2}\phi \rightarrow 0$ in $L^{\tfrac{2N}{N+2s}}(\Rn)$. Since $\dot{H}^s(\Rn)$ is continuously embedded in $L^{2^*_s}(\Rn)$, which is the dual space of $L^{\tfrac{2N}{N+2s}}(\Rn)$, it follows that $a|\phi +\psi_k|^{2^*_s-2}(\phi + \psi_k) - a|\phi|^{2^*_s-2}\phi \rightarrow 0$ in $\dot{H}^{s}(\Rn)'$.
\end{proof}	

{\bf Proof of Proposition \ref{PSP}:}

\begin{proof}
      We divide the proof into few steps.\\

      \underline{\bf Step 1:} Using standard arguments it follows that $(PS)$ sequences for $\bar I_{a,f}$ are bounded in $\Hs$. More precisely, as $k\to\infty$
      \begin{eqnarray*}
   \lim_{k\to\infty} \bar I_{a,f}(u_k) +o(1)+o(1)\|u_k\|_{H^s(\Rn)}&\geq& \bar I_{a,f}(u_k) \, - \,
      \frac{1}{2_s^*} \prescript{}{\hms}{\langle}\bar I'_{a,f}(u_k), u_k{\rangle}_{\dot{H}^s} \\
      & =&\bigg(\frac{1}{2}-\frac{1}{2_s^*}\bigg)\|u_k\|_{\Hs}^{2}
       - \left(1- \frac{1}{2_s^*} \right) \prescript{}{\hms}{\langle}f, u_k{\rangle}_{\dot{H}^s}\\
       &\geq&\bigg(\frac{1}{2}-\frac{1}{2_s^*}\bigg)\|u_k\|_{\Hs}^{2}-  \left(1- \frac{1}{2_s^*} \right)\|f\|_{\hms}\|u_k\|_{\dot{H}^s}.
      \end{eqnarray*}
    This immediately implies $(u_k)_k$ is bounded in $\Hs$. Consequently, up to a subsequence $ u_k \rightharpoonup \bar{u}$ in $\Hs$.  Moreover, as $\prescript{}{\hms}{\langle}\bar I_{a,f}'(u_k), v{\rangle}_{\dot{H}^s}\rightarrow 0 $
      as $k\rightarrow\infty$ for all  $v\in\Hs$, we have
		\be\label{B6}
		(-\De)^su_k - a(x)|u_k|^{2^*_s-2}u_k -f\underset{k}{\rightarrow} 0\quad \text{in}\quad \dot{H}^s(\R^N)'.
		\ee

	\medskip		
				
		\underline{\bf Step 2:}
		From \eqref{B6} we get by letting $k \rightarrow \infty$
		\be\lab{8-1-1}\iint_{\R^{2N}} \frac{(u_k(x) - u_k(y))((v(x) - v(y))}{|x-y|^{N+2s}} \, {\rm d}x\, {\rm d}y \,  - \, \int_{\R^N}a(x)\, |u_k|^{2^*_s-2} u_k v \,
		{\rm d} x \, - \, \prescript{}{\hms}{\langle}f, v{\rangle}_{\dot{H}^s} \, {\rightarrow} \,  0,
		\ee
	for all $v\in\Hs$. Moreover,  $u_k\rightharpoonup \bar u$ in $\dot{H}^s(\Rn)$ implies that
$$\iint_{\R^{2N}} \frac{(u_k(x) - u_k(y))((v(x) - v(y))}{|x-y|^{N+2s}} \, {\rm d}x\, {\rm d}y \longrightarrow \, \iint_{\R^{2N}} \frac{(\bar u(x) - \bar u(y))((v(x) - v(y))}{|x-y|^{N+2s}} \, {\rm d}x\, {\rm d}y.$$
Furthermore, using Lemma~\ref{L2} we conclude
 $$\int_{\R^N}a(x)\, |u_k|^{2^*_s-2} u_k v \,{\rm d} x \,   \longrightarrow \, \int_{\R^N}a(x)\, |\bar u|^{2^*_s-2} \bar u v \, {\rm d} x. $$
 Therefore, passing the limit in \eqref{8-1-1}, we have
 $$(-\De)^s\bar u = a(x) \, |\bar u|^{2^*_s-2}\bar u\, + \, f\text{ in }\Rn,\quad \bar u\in \Hs. $$

\medskip

  \underline{\bf Step 3:} In this step we show that $(u_k-\bar u)_k$ is a $(PS)$ sequence for $\bar I_{a,0}$ at the level \\
$\lim_{k\to\infty} \bar I_{a,f}(u_k)-\bar I_{a,f}(\bar u)$ and $u_k-\bar u\rightharpoonup 0$ in $\Hs$.

\medskip

To see this, first we observe that as $k\to\infty$
$$\|u_k-\bar{u}\|_{\Hs}^2 = \|u_k\|_{\Hs}^2-\|\bar{u}\|_{\Hs}^2+o(1)$$
and by
the Br\'ezis-Lieb lemma
$$\int_{\Rn}a(x)|u_k-\bar{u}|^{2_s^*}{\rm d}x = \int_{\Rn}a(x)|u_k|^{2_s^*}{\rm d}x - \int_{\Rn}a(x)|\bar{u}|^{2_s^*}{\rm d}x + o(1).
$$
    Further as $u_k\rightharpoonup u$ and $f\in \dot{H}^s(\Rn)'$, we also have
\be\lab{17-8-4}
\prescript{}{\hms}{\langle}f,u_k{\rangle}_{\dot{H}^s}\To \prescript{}{\hms}{\langle}f,u{\rangle}_{\dot{H}^s}.
\ee
Using above, it follows that
     \Bea
     \bar I_{a,0}(u_k-\bar{u})%&=& \frac{1}{2}\|u_k-\bar{u}\|_{\Hs}^2-\frac{1}{2_s^*}\int_{\Rn}a(x)|u_k-\bar{u}|^{2_s^*}{\rm d}x\no\\
     &=& \frac{1}{2}\|u_k\|_{\Hs}^2 -\frac{1}{2_s^*}\int_{\Rn}a(x)|u_k|^{2_s^*}{\rm d}x-\prescript{}{\hms}{\langle}f, u_k{\rangle}_{\dot{H}^s}\no\\
     &\;&-\left\{\frac{1}{2}\|\bar{u}\|_{\Hs}^2 -\frac{1}{2_s^*}\int_{\Rn}a(x)|\bar{u}|^{2_s^*}{\rm d}x-\prescript{}{\hms}{\langle}f, \bar{u}{\rangle}_{\dot{H}^s}\right\}+o(1)\no\\
     &\To& \lim_{k\to\infty}\bar I_{a,f}(u_k)-\bar I_{a,f}(\bar{u}).
     \Eea
     %Now we will prove that, $I'_{a,0}(u_k-\bar{u}) \to 0$ in $\hms.$\\
     As $\prescript{}{\hms}{\langle}\bar I'_{a,f}(\bar u), v{\rangle}_{\dot{H}^s}=0$
     for any $v\in\Hs$, we obtain
     \bea
     \prescript{}{\hms}{\langle}\bar I'_{a,0}(u_k-\bar u),\, v{\rangle}_{\dot{H}^s} &=& \langle u_k-\bar{u},v\rangle_{\dot{H}^s} -\int_{\Rn}a(x)|u_k-\bar{u}|^{2_s^*-2}(u_k-\bar{u})v\, {\rm d}x\no\\
     &=& \langle u_k,v\rangle_{H^s} -\int_{\R{^N}}a(x)|u_k|^{2_s^*-2}u_k v-\prescript{}{\dot{H}^{-s}}{\langle}f, v{\rangle}_{\dot{H}^s}\no\\
     &\;& -\langle \bar{u},v\rangle_{H^s} +\int_{\R{^N}}a(x)|\bar{u}|^{2_s^*-2}\bar{u} v+\prescript{}{\dot{H}^{-s}}{\langle}f, v{\rangle}_{\dot{H}^s}\no\\
     &\;&+\int_{\R{^N}}a(x)\bigg\{|u_k|^{2_s^*-2}u_k-|\bar{u}|^{2_s^*-2}\bar{u}-|u_k-\bar{u}|^{2_s^*-2}(u_k-\bar{u})\bigg\}v\, {\rm d}x\no\\
     &=& o(1)+\int_{\R{^N}}a(x)\bigg\{|u_k|^{2_s^*-2}u_k-|\bar{u}|^{2_s^*-2}\bar{u}-|u_k-\bar{u}|^{2_s^*-2}(u_k-\bar{u})\bigg\}v\, {\rm d}x\no
     \eea
     \textbf{\underline{Claim }:} $\displaystyle\int_{\Rn}a(x)\left\{|u_k|^{2_s^*-2}u_k-|\bar{u}|^{2_s^*-2}\bar{u}-|u_k-\bar{u}|^{2_s^*-2}(u_k-\bar{u})\right\}v {\rm d}x =o(1),\;\forall\, v\in\Hs.$\\

 To prove the claim,  we note that
 \bea\lab{12-10-2}
 \bigg |a\left\{|u_k|^{2_s^*-2}u_k-|\bar{u}|^{2_s^*-2}\bar{u}-|u_k-\bar{u}|^{2_s^*-2}(u_k-\bar{u})\right\}\bigg| \leq C\bigg(|u_k-\bar u|^{2^*_s-2}|\bar u|+|u|^{2^*_s-2}|u_k-\bar u|\bigg)
 \eea
since  $u_k\rightharpoonup \bar u$ in $\Hs$ implies $(u_k-\bar u)_k$ is uniformly bounded in $\Hs$ and thus also bounded in $L^{2_s^*}(\Rn)$.
Moreover, as $|\bar{u}|^{2_s^*}\in L^{1}(\Rn)$, using H\"{o}lder inequality on the RHS of \eqref{12-10-2}, given $\eps>0$ there exists $R=R(\eps)>0$ such that
\bea\lab{12-10-3}
&&\bigg |\int_{\Rn\setminus B(0,R)} a\left\{|u_k|^{2_s^*-2}u_k-|\bar{u}|^{2_s^*-2}\bar{u}-|u_k-\bar{u}|^{2_s^*-2}(u_k-\bar{u})\right\}v\,dx\bigg| \no\\
&\leq&C\bigg(\int_{\Rn}|u_k-\bar u|^{2^*_s}dx\bigg)^\frac{2^*_s-2}{2^*_s}\bigg(\int_{\Rn\setminus B(0,R)}|\bar u|^{2^*_s}dx\bigg)^\frac{1}{2^*_s}\bigg(\int_{\Rn\setminus B(0,R)}|v|^{2^*_s}dx\bigg)^\frac{1}{2^*_s}\no\\
&&+C\bigg(\int_{\Rn\setminus B(0,R)}|\bar u|^{2^*_s}dx\bigg)^\frac{2^*_s-2}{2^*_s}\bigg(\int_{\Rn}|u_k-\bar u|^{2^*_s}dx\bigg)^\frac{1}{2^*_s}\bigg(\int_{\Rn\setminus B(0,R)}|v|^{2^*_s}dx\bigg)^\frac{1}{2^*_s}\no\\
&<&\eps.
\eea
Similarly using \eqref{12-10-2} and Vitaly's convergence theorem, we also obtain $$\bigg |\int_{ B(0,R)} a\left\{|u_k|^{2_s^*-2}u_k-|\bar{u}|^{2_s^*-2}\bar{u}-|u_k-\bar{u}|^{2_s^*-2}(u_k-\bar{u})\right\}v\,dx\bigg| =o(1).$$
Combining this with \eqref{12-10-3}, the claim follows and hence Step 3 follows.

  \medskip

\underline{\bf Step 4:}  Rescaling of $(v_k)_k$ in the nontrivial case.

If $u_k\to \bar u \mbox{ in } \Hs$, then the theorem is proved with $m=0$. Therefore, we assume $u_k\not\to \bar u \mbox{ in } \Hs$.
Set  $$v_k:=u_k-\bar{u}.$$
From Step 3, we have $(-\De)^sv_k - a(x)|v_k|^{2_s^*-2}v_k \to 0\text{ in }\, \dot{H}^s(\Rn)'.$ Therefore, as $(v_k)_k$ is uniformly bounded in $\Hs$, $$\|v_k\|_{\Hs}^2= \int_{\R{^N}}a(x)|v(x)|^{2_s^*}{\rm d}x\leq\|a\|_{L^\infty(\Rn)} \|v_k\|_{L^{2^*_s}(\Rn)}^{2^*_s}.$$
Consequently, $v_k\not\To 0$ in $L^{2_s^*}(\Rn)$ and, up to a subsequence,
\be\lab{12-10-4} \inf_{k}\|v_k\|_{L^{2^*_s}(\Rn)}\geq C>0.\ee
Moreover, since $(v_k)_k$ is bounded in $\Hs$ and $\Hs\xhookrightarrow{}L^{2_s^*}(\Rn)\xhookrightarrow{}\mathcal{L}^{2,N-2s}(\Rn)$ (See Appendix A), we have
$\|v_k\|_{\mathcal{L}^{2,N-2s}(\Rn)}\leq c$ for some $c>0$ (independent of $k$). On the other hand, combining \eqref{12-10-4} with Lemma \ref{l:12-13-1} for $r=2$, we readily see that $\|v_k\|_{\mathcal{L}^{2,N-2s}(\Rn)}\geq \tilde C$, for some $\tilde C>0$ independent of $k$. Hence, there exists a positive constant, which we denote by $C$ again such that, for all $k$
\be\lab{12-10-5}
C\leq \|v_k\|_{\mathcal{L}^{2,N-2s}(\Rn)}\leq C^{-1}.
\ee
Combining \eqref{12-10-5} with the definition of $\mathcal{L}^{2,N-2s}(\Rn)$, we deduce that for every $k\in\mathbb{N}$, there exists $r_k>0$,  $x_k\in\Rn$ such that
\be\lab{12-10-6}r_k^{-2s}\int_{B(x_k,r_k)}|v_k|^2\,dx=r_k^{N-2s} \fint_{B(x_k,r_k)}|v_k|^2dx\geq \|v_k\|_{\mathcal{L}^{2,N-2s}(\Rn)}-\frac{C^2}{2k}\geq \bar C,\ee
for some $\bar C>0$ (independent of $k$).

Now we define, $\tilde{v}_k := r_k^{\frac{N-2s}{2}}v_k(r_kx+x_k).$
In the view of the scaling invariance of the $\Hs$ norm, $(\tilde v_k)_k$ is a bounded sequence in $\Hs$, thus up to a subsequence
$\tilde{v}_k\rightharpoonup \tilde{v}$ in $\Hs$. Consequently, $\tilde v_k\to \tilde v$ in $L^2_{\rm loc}(\Rn)$. Therefore, using change of variable, we observe from \eqref{12-10-6}
$$0<r_k^{-2s}\int_{B(x_k,r_k)}|v_k|^2\,dx=\int_{B(0,1)}|\tilde v_k|^2dx\To\int_{B(0,1)}|\tilde v|^2dx.$$
Hence $\tilde v\neq 0$. Clearly, up to a subsequence, either $x_k\to x_0\in\Rn$ or $|x_k|\to\infty$. Also note that  $\tilde v_k\rightharpoonup \tilde v\neq 0$ and $v_n\rightharpoonup 0$ implies $r_k\to 0$.

\medskip

\underline{\bf Step 5:} In this step we prove that $\tilde v$ solves
$$(-\De)^s \tilde v = a(x_0)|\tilde v|^{2^*_s-2}\tilde v\, \mbox{ in }\, \R^N, \quad
			\tilde v\in \dot{H}^s(\R^N),$$ or equivalently $a(x_0)^\frac{N-2s}{4s}\tilde{v}$ solves \eqref{AT0.8},
without the sign restriction.
			
To this aim, it is enough to show that for arbitrarily chosen $\varphi\in C^\infty_c(\Rn)$ the following holds:
			$$\langle\tilde v,\, \varphi \rangle_{\dot{H}^s}=\int_{\Rn}a(x_0)|\tilde v|^{2^*_s-2}\tilde v\varphi.$$
Let $\varphi\in C^\infty_c(\Rn)$ be arbitrary. By Step 3, we have $\bar I'_{a,0}(v_k)\to 0$ in $\dot{H}^s(\Rn)'$. Therefore,  as $\tilde v_k\rightharpoonup \tilde v$, using change of variables, we get
\begin{align*}
	\langle\tilde v,\, \varphi \rangle_{\dot{H}^s} &=\lim_{k\to\infty}\langle\tilde v_k,\, \varphi \rangle_{\dot{H}^s}\no\\
&=\lim_{k\to\infty}\iint_{\R^{2N}}\frac{r_k^\frac{N-2s}{2}\big(v_k(r_kx+x_k)-v_k(r_ky+x_k)\big)
\big(\varphi(x)-\varphi(y)\big)}{|x-y|^{N+2s}}dxdy\no\\
&=\lim_{k\to\infty}\iint_{\R^{2N}}\frac{r_k^{-\frac{N-2s}{2}}\big(v_k(x)-v_k(y)\big)\big(\varphi\big(\dfrac{x-x_k}{r_k}\big)
-\varphi\big(\dfrac{y-x_k}{r_k}\big)\big)}{|x-y|^{N+2s}}dxdy\no\\
&=\lim_{k\to\infty}\int_{\R^{N}}a(x)|v_k(x)|^{2^*_s-2}v_k(x)r_k^{-\frac{N-2s}{2}}\varphi\big(\dfrac{x-x_k}{r_k}\big)dx\no\\
&=\lim_{k\to\infty}\int_{\R{^N}}a(r_kx+x_k)|\tilde{v}_k(x)|^{2_s^*-2}\tilde{v}_k(x)\varphi(x)\, dx.
\end{align*}		
{\bf Claim:} $\lim_{k\to\infty}\displaystyle\int_{\R{^N}}a(r_kx+x_k)|\tilde{v}_k(x)|^{2_s^*-2}\tilde{v}_k(x)\varphi(x)\, dx=a(x_0)\int_{\Rn}|\tilde v|^{2^*_s-2}\tilde v\varphi\, dx.$	
		
To see this 			
\Bea
&&\bigg|\int_{\Rn}a(r_kx+x_k)|\tilde{v}_k(x)|^{2_s^*-2}\tilde{v}_k(x)\varphi(x)\, dx-a(x_0)\int_{\Rn}|\tilde v|^{2^*_s-2}\tilde v\varphi\, dx \bigg |\\
&\leq&\bigg|\int_{\Rn}a(r_kx+x_k)\big(|\tilde{v}_k|^{2_s^*-2}\tilde{v}_k-|\tilde v|^{2^*_s-2}\tilde v\big)\varphi\bigg| + \bigg|\int_{\Rn}\big(a(r_kx+x_k)-a(x_0)\big)|\tilde v|^{2^*_s-2}\tilde v\varphi\, dx \bigg |\\
&=&I_k+J_k.
\Eea			
Since $r_k\to 0$, $x_k\to x_0$, $a\in C(\Rn)$ and $|\tilde v|^{2^*_s-2}\tilde v\varphi\in L^1(\Rn)$
by the H\"{o}lder inequality,
the dominated convergence theorem gives that $\lim_{k\to\infty}J_k=0$. On the other hand, as $\varphi$ has compact support and $\tilde v_k\to \tilde v$ a.e. by Vitaly's convergence theorem, it is not difficult to see that $\lim_{k\to\infty}I_k=0$. Thus the claim follows. Hence, Step 5 is proved. Equivalently $a(x_0)^\frac{N-2s}{4s}\tilde{v}$ solves \eqref{AT0.8}, without sign requirement.

\medskip

Define, $$z_k(x) := v_k(x)-r_k^{-\frac{N-2s}{2}}\tilde{v}\left(\frac{x-x_k}{r_k}\right).$$

\medskip

\underline{\bf Step 6:}
In this step we show that $(z_k)_k$ is a $(PS)$ sequence for $\bar I_{a,0}$ at the level $\lim_{k\to\infty}\bar I_{a,f}(u_k)-\bar I_{a,f}(\bar{u})-a(x_0)^{-\frac{N-2s}{2s}}\bar I_{1,0}
(w),$ where $w$ is a solution of \eqref{AT0.8}, without the sign condition.

\vspace{2mm}

To see this, first we observe that if we define, $\tilde z_k:= r_k^\frac{N-2s}{2}z_k(r_kx+x_k)$, then it is easy to check that  $\tilde z_k= \tilde{v}_k-\tilde{v}$. Therefore, the scaling invariance in the norm of $\Hs$ gives
\be\lab{12-11-1}\|z_k\|_{\Hs}=\|\tilde z_k\|_{\Hs}=\|\tilde{v}_k-\tilde{v}\|_{\Hs}.\ee

{\bf Claim 1:} \be\lab{12-11-3}\displaystyle\int_{\R{^N}}a(r_kx+x_k)|\tilde{v}_k(x)-\tilde v(x)|^{2_s^*}{\rm d}x=\int_{\Rn}a(r_kx+x_k)|\tilde{v}_k(x)|^{2_s^*}{\rm d}x-\int_{\R{^N}}a(x_0)|\tilde{v}(x)|^{2_s^*}{\rm d}x+o(1).\ee

To prove the claim, we set $a_k:=a(r_kx+x_k)$. An elementary analysis yields for any $p>1$,
\be\lab{12-11-5}
\bigg|\tilde v_k|^{p-1}\tilde v_k-|\tilde v|^{p-1}\tilde v-|\tilde v_k-\tilde v|^{p-1} (\tilde v_k-\tilde v)\bigg| \leq
C\bigg(|\tilde v_k-\tilde v|\big)^{p-1}|\tilde v|+|\tilde v_k-\tilde v| |\tilde v|^{p-1}\bigg)
\ee
Thus,

\bea\lab{12-11-4}
\bigg||a_k^\frac{1}{2^*_s}\tilde v_k|^{2^*_s}-|a_k^\frac{1}{2^*_s}\tilde v|^{2^*_s}-|a_k^\frac{1}{2^*_s}(\tilde v_k-\tilde v)|^{2^*_s} \bigg| &\leq&
C\bigg(\big(a_k^\frac{1}{2^*_s}|\tilde v_k-\tilde v|\big)^{2^*_s-1}|a_k^\frac{1}{2^*_s}\tilde v|+\big|a_k^\frac{1}{2^*_s}(\tilde v_k-\tilde v)\big| |a_k^\frac{1}{2^*_s}\tilde v|^{2^*_s-1}\bigg)\no\\
&\leq& C\|a\|_{L^\infty(\Rn)}\bigg(|\tilde v_k-\tilde v|^{2^*_s-1}|\tilde v|+
|\tilde v_k-\tilde v||\tilde v|^{2^*_s-1}\bigg)
\eea
Using the dominated convergence theorem, we immediately have $\displaystyle{\lim_{k\to\infty}\int_{\Rn}|a_k^\frac{1}{2^*_s}\tilde v|^{2^*_s}dx=\int_{\Rn}a(x_0)|\tilde v|^{2^*_s}dx}$. Therefore, to prove the claim, it is enough to show that
$$(i)\,\, \int_{\Rn}|\tilde v_k-\tilde v|^{2^*_s-1}|\tilde v|dx=o(1)\quad\text{and}\quad (ii)\,\,\int_{\Rn}|\tilde v_k-\tilde v||\tilde v|^{2^*_s-1}dx=o(1).$$
For this, given any $\eps>0$ there exists $R=R(\eps)>0$ such that
$$\int_{\Rn\setminus B(0,R)}|\tilde v_k-\tilde v|^{2^*_s-1}|\tilde v|dx\leq \bigg(\int_{\Rn}|\tilde v_k-\tilde v|^{2^*_s}dx\bigg)^\frac{2^*_s-1}{2^*_s}\bigg(\int_{\Rn\setminus B(0,R)}|\tilde v|^{2^*_s}dx\bigg)^\frac{1}{2^*_s}<\eps,$$
since $\tilde{v}_k\rightharpoonup \tilde{v}$ in $\Hs$ implies that
$(\tilde v_k-\tilde v)_k$ is uniformly bounded in $L^{2^*_s}(\Rn)$. Similarly using Vitaly's convergence theorem via
the H\"{o}lder inequality, it can be also shown that $\displaystyle\int_{B(0,R)}|\tilde v_k-\tilde v|^{2^*_s-1}|\tilde v|dx=o(1)$.  Thus (i) holds. Similarly (ii) can also be proved. Hence the claim follows.

Applying \eqref{12-11-1} and \eqref{12-11-3}, we have
 \begin{align*}
   \bar I_{a,0}(z_k) &=\frac{1}{2}\|z_k\|_{\Hs}^2-\frac{1}{2_s^*}\int_{\R{^N}}a(x)|z_k(x)|^{2_s^*}{\rm d}x\no\\ &=\frac{1}{2}\|\tilde{v}_k-\tilde{v}\|_{\Hs}^2-\frac{1}{2_s^*}\int_{\R{^N}}a(r_kx+x_k)|z_n(r_nx+y_n)|^{2_s^*}r_n^{N}{\rm d}x\no\\
    &=\frac{1}{2}\|\tilde{v}_k-\tilde{v}\|_{\Hs}^2-\frac{1}{2_s^*}\int_{\R{^N}}a(r_nx+y_n)|(\tilde{v}_k-\tilde{v})(x)|^{2_s^*}{\rm d}x\no\\
    &= \frac{1}{2}\left\{\|\tilde{v}_k\|_{\Hs}^2-\|\tilde{v}\|_{\Hs}^2\right\}-\frac{1}{2_s^*}\int_{\R{^N}}a(r_kx+x_k)|\tilde{v}_k(x)|^{2_s^*}{\rm d}x\no\\
    &\qquad\qquad\qquad\qquad\qquad\qquad+\frac{1}{2_s^*}\int_{\R{^N}}a(x_0)|\tilde{v}(x)|^{2_s^*}{\rm d}x+o(1)\no\\
    &=\frac{1}{2}\|v_k\|_{\Hs}^2-\frac{1}{2_s^*}\int_{\R{^N}}a(x)|v_k|^{2_s^*}{\rm d}x\no\\
    &\qquad-a(x_0)^{-\tfrac{N-2s}{2s}}\left\{\frac{a(x_0)^{ \frac{N-2s}{2s}}}{2}\|\tilde{v}\|_{\Hs}^2-\frac{a(x_0)^\frac{N}{2s}}{2^*_s}\int_{\R{^N}}|\tilde{v}|^{2_s^*}\;{\rm d}x\right\}+o(1)\no\\
    &=\bar I_{a,0}(v_k)-a(x_0)^{-\tfrac{N-2s}{2s}}\bar I_{1,0}\big(a(x_0)^\frac{N-2s}{4s}\tilde v\big)+o(1)\no\\
    &=\lim_{k\to\infty}\bar I_{a,f}(u_k) -\bar I_{a,f}(\bar{u})-a(x_0)^{-\frac{N-2s}{2s}}\bar I_{1,0}(w),\no
    \end{align*}
    where $w$ is a solution of \eqref{AT0.8} without the sign condition. From the above energy estimate of $z_k$, we also observe that
    $$\bar I_{a,0}(z_k) =\bar I_{a,0}(v_k)-a(x_0)^{-\tfrac{N-2s}{2s}}\bar I_{1,0}(w)\leq \bar I_{a,0}(v_k)-a(x_0)^{-\tfrac{N-2s}{2s}}\bar I_{1,0}(W),$$
where $W$ is the unique positive solution of \eqref{AT0.8}, which also has the minimum energy among all the solutions of \eqref{AT0.8} with or without the sign condition. Further as   $\bar I_{1,0}(W)=\frac{s}{N}S^\frac{N}{2s}$ (see \eqref{17-7-5} and the comments below to it) and $a>0$, we obtain $\bar I_{a,0}(z_k)<\bar I_{a,0}(v_k)$.

 \medskip

Next, we estimate $\prescript{}{\hms}{\langle}I'_{a,0}(z_k), \varphi{\rangle}_{\dot{H}^s}$ for any arbitrarily chosen $\varphi\in C^\infty_c(\Rn)$. Towards this, first we observe that an easy computation yields $\langle z_k,\varphi\rangle_{\Hs}=\langle \tilde z_k,\varphi_k\rangle_{\Hs}$, where \\ $\varphi_k(x):=r_k^\frac{N-2s}{2}\varphi(r_kx+x_k)$.
Clearly, $\|\varphi_k\|_{\Hs}=\|\varphi\|_{\Hs}$ and $\varphi_k\rightharpoonup 0$ in $\Hs$ as $r_k\to 0$.
Using these and the fact that $\tilde z_k=\tilde v_k-\tilde v$, we obtain

\begin{align}\label{12-11-7}
    \prescript{}{\hms}\langle\bar I'_{a,0}(z_k), \varphi\rangle_{\dot{H}^s}&=\langle z_k,\varphi\rangle_{\Hs}- \int_{\Rn}a(x)|z_k|^{2^*_s-2}z_k\varphi\, dx\no\\
    &=\langle \tilde z_k,\varphi_k\rangle_{\Hs}- \int_{\Rn}a(r_kx+x_k)|z_k(r_kx+x_k)|^{2^*_s-2}z_k(r_kx+x_k)\varphi(r_kx+x_k)\, r_k^N \, dx\no\\
    &=\langle \tilde{v}_k-\tilde{v},\varphi_k\rangle_{\Hs}-\int_{\R^N}a(r_kx+x_k)|\tilde z_k|^{2_s^*-2}\tilde z_k\varphi_k\;{\rm d}x\\
    &=\langle \tilde{v}_k,\varphi_k\rangle_{\Hs}-\langle \tilde{v},\varphi_k\rangle_{\Hs}-\int_{\R^N}a(r_kx+x_k)|\tilde z_k|^{2_s^*-2}\tilde z_k\varphi_k\;{\rm d}x\no\\
    &=\langle v_k,\varphi\rangle_{\Hs}+o(1)-\int_{\R^N}a_k(x)|(\tilde v_k-\tilde v)(x)|^{2_s^*-2}(\tilde v_k-\tilde v)(x)\varphi_k\;{\rm d}x,\no
\end{align}
where in the last line we have used the fact that $\varphi_k\rightharpoonup 0$ in $\Hs$ and $a_k(x)=a(r_kx+x_k)$. Now, using \eqref{12-11-5} with $p=2^*_s-1$ and the following an argument similar to the proof of Claim 1, it can be shown that
\be\lab{12-11-6}
 \int_{\R^N}a_k|\tilde{v}_k-\tilde{v}|^{2_s^*-2}(\tilde{v}_k-\tilde{v})\varphi_k\;{\rm d}x = \int_{\R^N}a_k|\tilde{v}_k|^{2_s^*-2}\tilde{v}_k\varphi_k\;{\rm d}x - \int_{\R^N}a_k|\tilde{v}|^{2_s^*-2}\tilde{v}\varphi_k\;{\rm d}x +o(1),
\ee
Since $\|a_k\|_{L^\infty(\Rn)}=\|a\|_{L^\infty(\Rn)}$, $\|\varphi_k\|_{L^{2^*_s}(\Rn)}=\|\varphi\|_{L^{2^*_s}(\Rn)}$ and $\varphi_k\to 0$ a.e., it is easy to see that $\lim_{k\to\infty}\int_{\R^N}a_k|\tilde{v}|^{2_s^*-2}\tilde{v}\varphi_k\;{\rm d}x=0$. On the other hand, using change of variable it follows that
 $$ \int_{\R^N}a_k|\tilde{v}_k|^{2_s^*-2}\tilde{v}_k\varphi_k\;{\rm d}x= \int_{\R^N}a(x)|v_k(x)|^{2_s^*-2}v_k(x)\varphi(x)dx.$$
Substituting these into \eqref{12-11-6}, we obtain
\be\lab{12-11-8}
 \int_{\R^N}a_k|\tilde{v}_k-\tilde{v}|^{2_s^*-2}(\tilde{v}_k-\tilde{v})\varphi_k\;{\rm d}x = \int_{\R^N}a(x)|v_k(x)|^{2_s^*-2}v_k(x)\varphi(x)dx+o(1).
\ee

Thus substituting \eqref{12-11-8} into \eqref{12-11-7} yields
$$    \prescript{}{\hms}\langle I'_{a,0}(z_k), \varphi\rangle_{\dot{H}^s}=\langle v_k,\varphi\rangle_{\Hs}-\int_{\R^N}a(x)|v_k(x)|^{2_s^*-2}v_k(x)\varphi(x)dx+o(1)=\bar I'_{a, 0}(v_k)(\varphi)=0,$$
where for the last equality we have used Step 3. This completes the proof of Step 6.

Now, starting from a $(PS)$ sequence $(v_k)_k$ for $I_{a,0}$ we have extracted another $(PS)$ sequence $(z_k)_k$ at a level which is strictly lower than the previous one, with a fixed minimum amount of decrease. Since, $\sup_k\|v_k\|_{\Hs}\leq C$ (finite), hence the process should terminate after finitely many steps and the last $(PS)$ sequence strongly converges to $0$. Further, $\bigg|\log\big(\frac{r^i_k}{r_k^j}\big)\bigg|+\bigg|\frac{x_k^i-x_k^j}{r_k^i}\bigg|\To\infty\quad \mbox{for } i\neq j,\, \, 1\leq i,\, j\leq m$ (see
\cite[Theorem 1.2]{PS-2}).
This complete the proof.
\end{proof}

\medskip

We end this section with the definition of some functions which will be used throughout the rest of the paper. We define,

\be\lab{17-7-4}J(u) := \frac{\|u\|_{\dot{H}^s(\Rn)}^2}{\Big(\displaystyle\int_{\R^N}a(x) |u(x)|^{2^*_s}{\rm d}x\Big)^{\tfrac{2}{2^*_s}}}, \quad
	J_{\infty}(u) := \frac{\|u\|_{\dot{H}^s(\Rn)}^2}{\Big(\displaystyle\int_{\R^N} |u(x)|^{2^*_s}{\rm d}x\Big)^{\tfrac{2}{2^*_s}}}.\ee
\be\lab{17-7-5}S := \inf_{u\in\dot{H}^s(\Rn)\setminus \{0\}} J_{\infty}(u),
%\quad S_m := m^{\tfrac{2s}{N}}S, \quad m=2, 3, 4, \cdots
	 \ee
i.e., $S$ is the best Sobolev constant. From \cite{CLO}, it is known that $S$ is achieved by the unique positive solution  (up to translation and dilation) $W$ of \eqref{AT0.8}. Further, as already noted in the above proof, $W$ is
	  radially symmetric positive decreasing smooth function satisfying \eqref{9-7-1} and
\begin{equation}\label{barI}\bar I_{1,0}(W)=\frac{s}{N}S^\frac{N}{2s}>0.\end{equation}

\medskip

\section{Proof of Theorem \ref{th:ex-f}}

In this section we prove Theorem \ref{th:ex-f}. To this aim we first establish existence of two positive critical points  in the spirit of \cite{BCG} for the following functional:
	\be\label{EF-1}
			I_{a,f}(u)=\frac{1}{2}\|u\|^{2}_{\dot{H}^s(\R^N)}-\frac{1}{2^*_s}\int_{\R^N}a(x)u_+^{2^*_s}\,{\rm d}x - \prescript{}{(\dot{H}^{s})'}{\langle}f,u{\rangle}_{\dot{H}^s},
		\ee
		where $u_+:=\max\{u, 0\}$ and $u_-:=-\min\{u, 0\}$ and $f\in \dot{H}^s(\R^N)'$ is a nonnegative nontrivial functional.
		
	Clearly, if $u$ is a critical points of $I_{a,f}$, then $u$ solves
\begin{equation}\label{MAT2}
\left\{\begin{aligned}
		(-\Delta)^s u &= a(x) u_+^{2^*_s-1}+f(x)\;\;\text{in}\;\mathbb{R}^{N},\\
		&u \in \dot{H}^{s}{(\mathbb{R}^{N})}.
		 \end{aligned}
  \right.
\end{equation}	
	
\begin{remark}\lab{r:30-7-3}
{\rm If $u$ is a weak solution of \eqref{MAT2} and $f$ is a nonnegative functional, then taking $v=u_-$ as a test function in \eqref{MAT2}, we obtain

$$-\|u_-\|^2_{\dot{H}^s(\Rn)} \, -\, \iint_{\R^{2N}} \frac{[u_+(y)u_-(x)+u_+(x)u_-(y)]}{|x - y|^{N + 2s}} \, {\rm d}x \, {\rm d}y
=\prescript{}{H^{-s}}{\langle}f,u_-{\rangle}_{H^s}\geq 0.$$ This in turn implies $u_-=0$, i.e., $u\geq 0$. Therefore, using maximum principle \cite[Theorem 1.2]{DPQ}, it follows that, $u$ is a positive solution to \eqref{MAT2}. Hence $u$ is a solution to \eqref{MAT1}.}
\end{remark}

To establish the existence of two critical points for $I_{a,f}$, we first need to prove some auxiliary results. Towards that, we partition $\Hs$ into three disjoint sets.
Let $g:\Hs\to\R$ be defined by $$g(u):=\|u\|_{\Hs}^2-(2^*_s-1) ||a||_{L^\infty(\Rn)}\|u\|^{2^*_s}_{L^{2^*_s}(\Rn)}.$$
Now, put
$$U_1:=\{u\in\Hs: u=0 \quad\text{or}\quad g(u)>0\}, \quad U_2:=\{u\in\Hs: g(u)<0\},$$
$$U:=\{u\in\Hs\setminus\{0\}: g(u)=0\}.$$

\begin{remark}\lab{r:30-7-1}
{\rm Using the Sobolev inequality, it is easy to see that $\|u\|_{\Hs}$ and $\|u\|_{L^{2^*_s}(\Rn)}$ are bounded away from $0$ for all $u\in U$.}
\end{remark}

Set
\be\lab{30-7-1}
c_0:=\inf_{U_1} {I_{a,f}}(u) \quad\text{and}\quad c_1:=\inf_{U} {I_{a,f}}(u).
\ee

\begin{remark}\lab{r:30-7-2}
{\rm  Clearly,  $g(tu)=t^2\|u\|_{\Hs}^2-t^{2^*_s}(2^*_s-1)\|a\|_{L^\infty(\Rn)}
\|u\|^{2^*_s}_{L^{2^*_s}(\Rn)}$ for any $t>0$ and $u\in\Hs$.
Moreover $g(0)=0$ and $t\mapsto g(tu)$ is a strictly concave function.
Thus, for any $u\in \Hs$, with $\|u\|_{\Hs}=1$, there exists unique $t=t(u)$ such that $tu\in U.$ On the other hand,    $g(tu)=(t^2-t^{2^*_s})\|u\|_{\Hs}^2$ for any $u\in U$. This implies
that
 $$tu\in U_1 \quad\text{for all}\quad t\in (0,1) \quad\text{and}\quad tu\in U_2 \quad\text{for all}\quad  t > 1.$$}
\end{remark}

\begin{lemma}\lab{l:1-8-1}
Assume that $C_0$ is defined as in Theorem \ref{th:ex-f}. Then
$$\frac{4s}{N+2s}\|u\|_{\Hs} \geq C_0S^{\tfrac{N}{4s}} \quad\mbox{for
all } u\in U,$$
where $S$ is defined in \eqref{17-7-5}.
\end{lemma}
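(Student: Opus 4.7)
The plan is straightforward: exploit the defining equation $g(u)=0$ of $U$ together with the Sobolev inequality to get a lower bound on $\|u\|_{\dot{H}^s(\R^N)}$, and then multiply through by $4s/(N+2s)$.

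First, for $u\in U$ I would write out $g(u)=0$ as
\[
\|u\|_{\dot{H}^s(\R^N)}^2=(2^*_s-1)\|a\|_{L^\infty(\R^N)}\|u\|_{L^{2^*_s}(\R^N)}^{2^*_s},
\]
and then invoke the Sobolev inequality $S\|u\|_{L^{2^*_s}(\R^N)}^2\le \|u\|_{\dot{H}^s(\R^N)}^2$ (where $S$ is the constant from \eqref{17-7-5}) to eliminate the $L^{2^*_s}$-norm. Raising the Sobolev inequality to the power $2^*_s/2$ and substituting yields
\[
\|u\|_{\dot{H}^s(\R^N)}^2 \le (2^*_s-1)\|a\|_{L^\infty(\R^N)}\,S^{-2^*_s/2}\,\|u\|_{\dot{H}^s(\R^N)}^{2^*_s}.
\]
Since $\|u\|_{\dot{H}^s(\R^N)}>0$ by Remark~\ref{r:30-7-1}, I can divide through by $\|u\|_{\dot{H}^s(\R^N)}^2$ to obtain a lower bound on $\|u\|_{\dot{H}^s(\R^N)}^{2^*_s-2}$.

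Then I would simply use $2^*_s-2=\frac{4s}{N-2s}$, so that raising to the power $\frac{N-2s}{4s}$ gives
\[
\|u\|_{\dot{H}^s(\R^N)}\ge \bigl((2^*_s-1)\|a\|_{L^\infty(\R^N)}\bigr)^{-\frac{N-2s}{4s}} S^{\frac{N}{4s}}.
\]
Multiplying both sides by $\frac{4s}{N+2s}$ and recognizing the right-hand side as $C_0 S^{N/(4s)}$ from the definition in Theorem~\ref{th:ex-f} finishes the proof. There is no real obstacle here — the only thing to check is the arithmetic of the exponents, in particular that $(2^*_s-2)\cdot\frac{N-2s}{4s}=1$ and that $\frac{2^*_s}{2}\cdot\frac{N-2s}{4s}=\frac{N}{4s}$, which are both immediate from $2^*_s=\frac{2N}{N-2s}$.
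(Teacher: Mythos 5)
Your proof is correct and follows essentially the same route as the paper: both deduce from $g(u)=0$ that $\|u\|_{L^{2^*_s}}$ is expressed in terms of $\|u\|_{\dot{H}^s}$, then combine with the Sobolev inequality $\|u\|_{\dot{H}^s}\ge S^{1/2}\|u\|_{L^{2^*_s}}$ and rearrange the exponents to obtain the lower bound $\|u\|_{\dot{H}^s}\ge \bigl((2^*_s-1)\|a\|_{L^\infty}\bigr)^{-\frac{N-2s}{4s}} S^{\frac{N}{4s}}$, after which multiplying by $\frac{4s}{N+2s}$ gives the claim. The only cosmetic difference is the order in which you raise to powers and substitute, which does not change the argument.
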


\begin{proof} Note that
$$\|u\|_{L^{2^*_s}(\Rn)}=\frac{\|u\|_{\Hs}^\frac{2}{2^*_s}}
{\big((2^*_s-1)\|a\|_{L^\infty(\Rn)}\big)^\frac{1}{2^*_s}},\mbox{
whenever }u\in U.$$
Therefore, combining this with the definition of $S$, we have
$$\|u\|_{\Hs}\geq S^\frac{1}{2}\|u\|_{L^{2^*_s}(\Rn)} = S^\frac{1}{2}\frac{\|u\|_{\Hs}^\frac{2}{2^*_s}}{((2^*_s-1)
\|a\|_{L^\infty(\Rn)})^\frac{1}{2^*_s}}$$
for all $u\in U$. From here, using the definition of $C_0$, the lemma follows.
\end{proof}

\begin{lemma}\lab{l:30-7-1}
Assume $C_0$ is defined as in Theorem \ref{th:ex-f} and $c_0$ and $c_1$ are defined as in \eqref{30-7-1}. Further, if
\be\lab{J1.3}
\inf_{u\in\Hs,\, \|u\|_{L^{2^*_s}(\Rn)}=1}\bigg\{C_0\|u\|_{\Hs}^\frac{N+2s}{2s}
-\prescript{}{\hms}{\langle}f,u{\rangle}_{H^s}\bigg\}>0,
\ee
then $c_0<c_1$.
\end{lemma}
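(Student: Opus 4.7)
The plan is to compare $I_{a,f}(u)$ with $I_{a,f}((1-\varepsilon)u)$ for $u\in U$. Set $\varphi_u(t):=I_{a,f}(tu)$; by Remark~\ref{r:30-7-2} one has $tu\in U_1$ for every $t\in(0,1)$, so $\varphi_u(t)\geq c_0$ on $[0,1]$. The main point is to establish a positive lower bound for $\varphi_u'(1)$ that is uniform in $u\in U$, and then to promote this pointwise strict inequality into a uniform quantitative gap via a second order Taylor expansion applied to a minimizing sequence for $c_1$.

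For $u\in U$ the identity $\|u\|^2_{\Hs}=(2^*_s-1)\|a\|_{L^\infty(\Rn)}\|u\|^{2^*_s}_{L^{2^*_s}(\Rn)}$, combined with $\int_{\Rn}a\,u_+^{2^*_s}\,dx\leq \|a\|_{L^\infty(\Rn)}\|u\|^{2^*_s}_{L^{2^*_s}(\Rn)}$, yields
\[
\varphi_u'(1)=\|u\|^2_{\Hs}-\int_{\Rn}a\,u_+^{2^*_s}\,dx-\prescript{}{\hms}{\langle}f,u{\rangle}_{\dot{H}^s}\;\geq\; \frac{4s}{N+2s}\,\|u\|^2_{\Hs}-\prescript{}{\hms}{\langle}f,u{\rangle}_{\dot{H}^s}.
\]
Writing $u=\lambda v$ with $\lambda:=\|u\|_{L^{2^*_s}(\Rn)}$ and $\|v\|_{L^{2^*_s}(\Rn)}=1$, the algebraic manipulation used in the proof of Lemma~\ref{l:1-8-1} identifies the right hand side as $\lambda\big[C_0\,\|v\|^{(N+2s)/(2s)}_{\Hs}-\prescript{}{\hms}{\langle}f,v{\rangle}_{\dot{H}^s}\big]\geq \lambda\,\delta_0$, where $\delta_0>0$ is the positive infimum provided by~\eqref{J1.3}. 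The Sobolev inequality together with the defining identity of $U$ forces $\lambda\geq \lambda_0:=\big[S/((2^*_s-1)\|a\|_{L^\infty(\Rn)})\big]^{(N-2s)/(4s)}$, so $\varphi_u'(1)\geq \eta_0:=\lambda_0\delta_0>0$ uniformly in $u\in U$.

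To convert this into the strict gap $c_0<c_1$, pick a minimizing sequence $(u_n)\subset U$ for $c_1$. The same bound $(2^*_s-1)\int a(u_n)_+^{2^*_s}\leq \|u_n\|^2_{\Hs}$ yields $I_{a,f}(u_n)\geq \big(\tfrac{1}{2}-\tfrac{1}{2^*_s(2^*_s-1)}\big)\|u_n\|^2_{\Hs}-\|f\|_{\hms}\|u_n\|_{\Hs}$, hence $(u_n)$ is bounded in $\Hs$, say $\|u_n\|_{\Hs}\leq M$; moreover $\varphi_{u_n}''(t)=\|u_n\|^2_{\Hs}-(2^*_s-1)t^{2^*_s-2}\int a(u_n)_+^{2^*_s}\,dx$ lies in $[0,\|u_n\|^2_{\Hs}]$ on $[0,1]$ because $u_n\in U$, so $|\varphi_{u_n}''|\leq M^2$. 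A second order Taylor expansion at $t=1$ therefore produces
\[
\varphi_{u_n}(1-\varepsilon)\;\leq\; I_{a,f}(u_n)-\varepsilon\,\eta_0+\tfrac{\varepsilon^2}{2}M^2 \quad\text{for every }\varepsilon\in(0,1),
\]
and the choice $\varepsilon=\eta_0/M^2$, independent of $n$, combined with $(1-\varepsilon)u_n\in U_1$ (so that $\varphi_{u_n}(1-\varepsilon)\geq c_0$), gives $c_0\leq I_{a,f}(u_n)-\eta_0^2/(2M^2)$. Passing to $n\to\infty$ yields $c_1\geq c_0+\eta_0^2/(2M^2)>c_0$. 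The main subtlety is exactly this promotion of the pointwise bound $\varphi_u'(1)>0$ into a uniform gap: the first order information alone only gives $c_1\geq c_0$, and it is the coercivity of $I_{a,f}$ on $U$ together with the $C^2$ estimate on $\varphi_u$ that forces the inequality to be strict.
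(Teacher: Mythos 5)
Your argument is correct and is essentially the paper's own proof in streamlined form: the paper runs the same two estimates (a uniform positive lower bound, coming from \eqref{J1.3} and the Lemma \ref{l:1-8-1}-type identity on $U$, for the ray-derivative at $t=1$, plus a uniform second-derivative bound along a bounded minimizing sequence) through an auxiliary functional $\tilde J$ and an intermediate zero $t_n\in(0,1)$ of its ray-derivative, whereas you Taylor-expand $I_{a,f}(tu_n)$ directly at $t=1$ and step back a fixed $\varepsilon$, which lets you skip the paper's detour through $c_0<0$, the sign of $\prescript{}{\hms}{\langle}f,u_n{\rangle}_{\dot H^s}$ and the uniqueness of $t_n$. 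The only detail to adjust is that $\varepsilon=\eta_0/M^2$ need not lie in $(0,1)$, which is needed for $(1-\varepsilon)u_n\in U_1$ via Remark \ref{r:30-7-2}; taking $\varepsilon=\min\{\eta_0/M^2,\tfrac12\}$ yields the uniform gap $\varepsilon\eta_0/2$ and the same conclusion.
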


\begin{proof}
Define
\be\lab{31-7-1}\tilde J(u):=\frac{1}{2}\|u\|_{\Hs}^2-\frac{\|a\|_{L^\infty(\Rn)}}{2^*_s}
\|u\|^{2^*_s}_{L^{2^*_s}(\Rn)}-\prescript{}{\hms}{\langle}f,u{\rangle}_{\dot{H}^s}, \quad u\in\Hs.\ee

{\bf Step 1}: In this step we prove that there exists $\alpha>0$ such that
$$\frac{d}{dt}\tilde J(tu)|_{t=1}\geq \al \quad\mbox{for all } u\in U.$$
From the definition of $\tilde J$, we have $\frac{d}{dt}\tilde J(tu)|_{t=1}=\|u\|_{\Hs}^2-\|a\|_{L^\infty(\Rn)}\|u\|_{L^{2^*_s}(\Rn)}^{2^*_s}-\prescript{}{\hms}{\langle}f,u{\rangle}_{\dot{H}^s}$. Therefore, using the definition of $U$ and the value of $C_0$,  we have for  $u\in U$
\begin{align}\lab{30-7-4}
\frac{d}{dt}\tilde J(tu)|_{t=1}&=\frac{4s}{N+2s}\|u\|_{\Hs}^2-\prescript{}{\hms}{\langle}f,u{\rangle}_{\dot{H}^s}\no\\
&= \big((2^*_s-1)\|a\|_{L^\infty(\Rn)}\big)^\frac{N-2s}{4s}C_0\|u\|_{\Hs}^2-\prescript{}{\hms}{\langle}f,u{\rangle}_{\dot{H}^s}\\
%&=&\bigg(\frac{\|u\|^2_{\Hs}}{\|u\|_{L^{2^*_s}(\Rn)}^{2^*_s}}\bigg)^\frac{1}{p-1}C_0\|u\|_{\Hs}^2-\prescript{}{\hms}{\langle}f,u{\rangle}_{\dot{H}^s}\no\\
&=C_0\frac{\|u\|^\frac{N+2s}{2s}_{\Hs}}{\|u\|_{L^{2^*_s}(\Rn)}^\frac{N}{2s}}-\prescript{}{\hms}{\langle}f,u{\rangle}_{\dot{H}^s}.\no
\end{align}
Further, $\eqref{J1.3}$ implies there  exists $d>0$ such that

		\be\lab{30-7-3}
		\inf_{u\in\Hs,\, \|u\|_{L^{2^*_s}(\Rn)}=1}\Big\{C_0\|u\|_{\Hs}^{\tfrac{N+2s}{2s}}-\prescript{}{\hms}{\langle}f,u{\rangle}_{\dot{H}^s}\Big\}\geq d.\ee
		Now,
		\Bea
		\eqref{30-7-3}&\Longleftrightarrow& C_0\frac{\|u\|_{\Hs}^{\tfrac{N+2s}{2s}}}{\|u\|_{L^{2^*_s}(\Rn)}^{\tfrac{N}{2s}}}-\prescript{}{\hms}{\langle}f,u{\rangle}_{\dot{H}^s} \geq d, \quad \|u\|_{L^{2^*_s}(\Rn)}=1\\
		&\Longleftrightarrow& C_0\frac{\|u\|_{\Hs}^{\tfrac{N+2s}{2s}}}{\|u\|_{L^{2^*_s}(\Rn)}^{\tfrac{N}{2s}}}-\prescript{}{\hms}{\langle}f,u{\rangle}_{\dot{H}^s} \geq d\|u\|_{L^{2^*_s}(\Rn)}, \quad u\in\Hs\setminus\{0\}.
\Eea
		Hence, plugging back the above estimate into \eqref{30-7-4} and using Remark \ref{r:30-7-1}, we complete the proof of Step 1.
		
		\vspace{2mm}
		
{\bf Step 2:} Let $(u_n)_n$ be a minimizing sequence for $I_{a,f}$ on $U$, i.e., $I_{a,f}(u_n)\to c_1$ and $\|u_n\|_{\Hs}^2=(2^*_s-1)\|a\|_{L^\infty(\Rn)}\|u_n\|_{L^{2^*_s}(\Rn)}^{2^*_s}$. Therefore, for large $n$
$$c_1+o(1)\geq I_{a,f}(u_n)\geq \tilde J(u_n)\geq \bigg(\frac{1}{2}-\frac{1}{2^*_s(2^*_s-1)}\bigg)\|u_n\|^2_{\Hs}- \|f\|_{\hms}\|u_n\|_{\Hs}.$$
This implies that $(\tilde J(u_n))_n$ is a bounded sequence and $(\|u_n\|_{\Hs})_n$ and
$(\|u_n\|_{L^{2^*_s}(\Rn)})_n$ are bounded.

{\bf Claim}: $c_0<0$.

Indeed, to prove the claim, it is enough to show that there exists $v\in U_1$ such that $I_{a,f}(v)<0$. Note that, thanks to Remark \ref{r:30-7-2}, we can choose $u\in U$ such that $\prescript{}{\hms}{\langle}f,u{\rangle}_{\dot{H}^s}>0$. Therefore,
$$I_{a,f}(tu)\leq t^2\bigg[\frac{(2^*_s-1)\|a\|_{L^\infty(\Rn)}}{2}-\frac{t^{2^*_s-2}}{2^*_s}\bigg]\int_{\Rn}|u|^{2^*_s}dx-t\prescript{}{\hms}{\langle}f,u{\rangle}_{\dot{H}^s} <0.$$
for $t<<1$. Moreover, $tu\in U_1$
 by Remark \ref{r:30-7-2}. Hence the claim follows.

Thanks to the above claim, $I_{a,f}(u_n)< 0$ for large $n$. Consequently,
$$0>I_{a,f}(u_n)\geq \bigg(\frac{1}{2}-\frac{1}{2^*_s(2^*_s-1)}\bigg)\|u_n\|^2_{\Hs}-\prescript{}{\hms}{\langle}f,u_n{\rangle}_{\dot{H}^s}.$$
This in turn implies $\prescript{}{\hms}{\langle}f,u_n{\rangle}_{\dot{H}^s}> 0$ for all large $n$.
Consequently, $\frac{d}{dt}\tilde J(tu_n)<0$ for $t>0$ small enough. Thus, by Step 1, there exists $t_n\in (0,1)$ such that $\frac{d}{dt}\tilde J(t_n u_n)=0$.
Moreover,  it is easy to check that for all $u\in U$, the function  $\frac{d}{dt}\tilde J(tu)$ is strictly increasing in $t\in [0,1)$ and therefore we can conclude that
 $t_n$ is unique.\medskip
	
{\bf Step 3}: In this step we show  that 	
\be\label{J3}
	\liminf_{n\rightarrow\infty}\{\tilde J(u_n)-\tilde J(t_nu_n)\}>0.
	\ee
	
Observe that $\tilde J(u_n)-\tilde J(t_nu_n)=\displaystyle\int_{t_n}^{1}\frac{d}{dt}\{\tilde  J(tu_n)\} \, {\rm d}t$ and that for all $n\in\mathbb{N}$ there is $\xi_n>0$ such that
	$t_n\in(0,\;1-2\xi_n)$ and $\frac{d}{dt}\tilde J(tu_n)\geq \al/2$ for $t\in[1-\xi_n,\;1]$.\\
	To establish \eqref{J3}, it is enough to show that $\xi_n>0$ can be chosen independent of $n\in\mathbb{N}$. But this is true since, $\frac{d}{dt}\tilde J(tu_n)|_{t=1}\geq \al$ and by the boundedness of $\{u_n\},$\\
	$$\bigg|\frac{d^2}{dt^2}\tilde J(tu_n)\bigg| = \bigg|\|u_n\|_{\Hs}^2 - (2^*_s-1)\|a\|_{L^\infty(\Rn)}t^{2^*_s-2}\int_{\Rn}|u_n|^{2^*_s}dx\bigg|= \bigg|(1-t^{2^*_s-2})\|u_n\|_{\Hs}^2\bigg|\leq C,$$
	for all $n\geq 1$ and $t\in[0,\;1]$.
	
\vspace{2mm}	
	
{\bf Step 4:} From the definition of $I_{a,f}$ and $\tilde J$, it immediately follows that $\frac{d}{dt}I_{a,f}(tu)\geq \frac{d}{dt}\tilde J(tu)$ for all $u\in\Hs$ and for all $t>0$. Hence,
$$I_{a,f}(u_n)-I_{a,f}(t_nu_n)= \int_{t_n}^{1}\frac{d}{dt}(I_{a,f}(tu_n))\; {\rm d}t\geq \int_{t_n}^{1}\frac{d}{dt}\tilde J(tu_n)\; {\rm d}t = \tilde J(u_n)-\tilde J(t_nu_n).$$
	Since $(u_n)_n\subset U$  is a minimizing sequence for $I_{a,f}$ on $U$, and $t_nu_n\in U_1,$ we conclude using \eqref{J3} that\\
	$$c_0 = \inf_{u\in U_1}I_{a,f}(u)< \inf_{u\in U}I_{a,f}(u)\equiv c_1$$
\end{proof}

Next, we introduce the equation at infinity associated to \eqref{MAT2}:
\be\lab{30-7-5}
(-\De)^s u=u_+^{2^*_s-1} \quad\text{in}\quad\Rn, \quad u\in\dot{H}^s(\Rn),
\ee
and the corresponding functional $I_{1,0}:\Hs\to\R$ defined by
$$I_{1,0}(u)=\frac{1}{2}\|u\|_{\Hs}^2-\frac{1}{2^*_s}\int_{\Rn}u_+^{2^*_s} {\rm d}x.$$
Arguing as in Remark \ref{r:30-7-3}, it immediately follows that solutions of \eqref{30-7-5} are the positive solutions of \eqref{AT0.8}.

\begin{proposition}\lab{p:30-7-1}
Assume that \eqref{J1.3} holds. Then $I_{a,f}$ has a critical point $u_0\in U_1$, with $I_{a,f}(u_0)=c_0$. In particular, $u_0$ is a positive weak solution to \eqref{MAT1}.
\end{proposition}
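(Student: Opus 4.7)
The plan is to obtain $u_0$ as the strong limit of a $(PS)$-sequence $(u_n)\subset U_1$ for $I_{a,f}$ at level $c_0$, produced by Ekeland's variational principle, and to force strong convergence using the constraint $u_n\in U_1$ together with the energy gap $c_0<c_1$ from Lemma \ref{l:30-7-1}.

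First I would verify that $U_1$ is open in $\Hs$: continuity of $g$ makes $\{g>0\}$ open, and $0$ lies in its interior since the Sobolev inequality gives $g(u)\geq \|u\|_{\Hs}^2\bigl(1-(2^*_s-1)\|a\|_{L^\infty(\Rn)}S^{-2^*_s/2}\|u\|_{\Hs}^{2^*_s-2}\bigr)>0$ for $\|u\|_{\Hs}$ small, so $\partial U_1=U$. Any minimizing sequence $(u_n)\subset U_1$ for $c_0$ is bounded: for $u\in U_1\setminus\{0\}$, $g(u)\geq 0$ gives $\int_\Rn a u_+^{2^*_s}\,dx\le \tfrac{1}{2^*_s-1}\|u\|_{\Hs}^2$, whence
$$I_{a,f}(u)\geq \Bigl(\tfrac12-\tfrac{1}{2^*_s(2^*_s-1)}\Bigr)\|u\|_{\Hs}^2-\|f\|_{\hms}\|u\|_{\Hs}$$
with a strictly positive leading coefficient. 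Applying Ekeland's principle on the closed set $\bar U_1$ and invoking $c_0<c_1$ so the resulting sequence eventually lies in the open interior $U_1$, I obtain a free $(PS)$-sequence $(u_n)$ for $I_{a,f}$ at level $c_0$.

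Next I would extract a subsequence $u_n\rightharpoonup u_0$ in $\Hs$; as in Step 2 of the proof of Proposition \ref{PSP}, $u_0$ is a weak solution of \eqref{MAT2}, hence by Remark \ref{r:30-7-3} $u_0\geq 0$ and, if nontrivial, a positive weak solution of \eqref{MAT1}. Setting $v_n:=u_n-u_0$, I would combine the Brezis-Lieb lemma (for $\|\cdot\|_{\Hs}^2$, for $\|\cdot\|_{L^{2^*_s}(\Rn)}^{2^*_s}$, and for $\int_\Rn a u_{n,+}^{2^*_s}\,dx$, the last using the $1$-Lipschitz bound $|s_+-t_+|\le|s-t|$ together with $u_0\geq 0$) with the Nehari identity $(I'_{a,f}(u_n)-I'_{a,f}(u_0))[v_n]=o(1)$ to obtain
$$\|v_n\|_{\Hs}^2\leq\|a\|_{L^\infty(\Rn)}\|v_n\|_{L^{2^*_s}(\Rn)}^{2^*_s}+o(1),\qquad c_0-I_{a,f}(u_0)=\tfrac{s}{N}L+o(1),$$
where $L:=\lim_{n\to\infty}\|v_n\|_{\Hs}^2\geq 0$. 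The first bound forces $g(v_n)\leq-(2^*_s-2)\|a\|_{L^\infty(\Rn)}\|v_n\|_{L^{2^*_s}(\Rn)}^{2^*_s}+o(1)\leq o(1)$, while the splitting $0\leq g(u_n)=g(u_0)+g(v_n)+o(1)$ combined with this yields $g(u_0)\geq 0$.

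Finally I would rule out $u_0\in U\cup\{0\}$. If $u_0=0$ then $c_0=\tfrac{s}{N}L\geq 0$ contradicts $c_0<0$ (proved as in Step 2 of the proof of Lemma \ref{l:30-7-1}: pick $u\in U$ with $\langle f,u\rangle_{\dot{H}^s}>0$ and observe $I_{a,f}(tu)<0$, $tu\in U_1$ for $t>0$ small). If $u_0\in U$ then Lemma \ref{l:30-7-1} gives $I_{a,f}(u_0)\geq c_1>c_0\geq I_{a,f}(u_0)$, absurd. Hence $u_0\in U_1\setminus\{0\}$ and $I_{a,f}(u_0)\geq c_0$; combined with the energy identity and $L\geq 0$ this forces $L=0$ and $I_{a,f}(u_0)=c_0$, so $u_n\to u_0$ strongly in $\Hs$ and $u_0$ is the sought positive weak solution. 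The main obstacle I foresee is the careful implementation of Ekeland on $\bar U_1$ to extract a \emph{free} $(PS)$-sequence (which works precisely thanks to the strict gap $c_0<c_1$), together with the Brezis-Lieb step for the positive-part nonlinearity $u_+^{2^*_s}$ when $u_n$ itself need not be nonnegative.
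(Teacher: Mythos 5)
Your proof is correct, but it takes a genuinely different and more self-contained route than the paper's. In Step~3 the paper invokes the full Palais--Smale decomposition of Proposition~\ref{PSP}: it writes $u_n=u_0+\sum_j a(x^j)^{-(N-2s)/4s}W^{r_n^j,x_n^j}+o(1)$, checks by a direct computation (see \eqref{12-4-3}) that each bubble has $g<0$, deduces $g(u_0)\le 0$ from the energy drop $I_{a,f}(u_0)<c_0$, and then uses the asymptotic orthogonality of the bubbles to conclude $g(u_n)<0$ for large $n$, contradicting $u_n\in U_1$. You bypass the decomposition entirely: Brezis--Lieb for $\|\cdot\|_{\Hs}^2$, $\|\cdot\|_{L^{2^*_s}(\Rn)}^{2^*_s}$ and $\int_{\Rn} au_+^{2^*_s}\,dx$, together with the Nehari-type cancellation $\bigl(I_{a,f}'(u_n)-I_{a,f}'(u_0)\bigr)[v_n]=o(1)$, give both $\|v_n\|_{\Hs}^2\le\|a\|_{L^\infty(\Rn)}\|v_n\|_{L^{2^*_s}(\Rn)}^{2^*_s}+o(1)$ (hence $g(v_n)\le o(1)$ and $g(u_0)\ge 0$) and the energy identity $c_0=I_{a,f}(u_0)+\tfrac{s}{N}L$; the dichotomy $u_0=0$ or $u_0\in U$ or $u_0\in U_1\setminus\{0\}$, combined with $c_0<0$ and $c_0<c_1$, then forces $L=0$. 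The paper's route reuses machinery that is needed anyway in Proposition~\ref{p:31-7-2}, and the bubble picture makes the sign obstruction transparent; your route is shorter here and, as a by-product, does not require continuity of $a$ (only $0<a\in L^\infty(\Rn)$), since Proposition~\ref{PSP} is never invoked and Lemma~\ref{L2}, Lemma~\ref{l:30-7-1} and Remark~\ref{r:30-7-3} need only boundedness. The one step you should spell out in full is the Brezis--Lieb type estimate for the derivative nonlinearity, $\int_{\Rn} a\bigl[(u_n)_+^{2^*_s-1}-(u_0)_+^{2^*_s-1}-(v_n)_+^{2^*_s-1}\bigr]v_n\,dx=o(1)$; it goes through exactly as the Claim in Step~3 of the proof of Proposition~\ref{PSP}, with $t\mapsto t_+^{2^*_s-1}$ replacing $t\mapsto|t|^{2^*_s-2}t$.
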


\begin{proof} We decompose the proof into few steps.

\vspace{2mm}

{\bf Step 1}: $c_0>-\infty$.

Note that $I_{a,f}(u)\geq \tilde J(u)$, where $\tilde J$ is defined as in \eqref{31-7-1}. Therefore, in order to prove Step 1, it is enough to show that $\tilde J$ is bounded from below. From definition of $U_1$, it immediately follows that
\be\lab{31-7-3}\tilde J(u)\geq \left[\frac{1}{2}-\frac{1}{2^*_s(2^*_s-1)}\right]\|u\|_{\Hs}^2 - \|f\|_{\hms}\|u\|_{\Hs}\;\, \mbox{for all}\, u\in {U}_1.\ee
As RHS is quadratic function in $\|u\|_{\Hs}$, $\tilde J$ is bounded from below. Hence Step 1 follows.

\vspace{2mm}

{\bf Step 2}: In this step we show that there exists a bounded  nonnegative $(PS)$ sequence
$(u_n)_n \subset U_1$ for $I_{a,f}$ at level $c_0$.

Let $(u_n)_n \subset \bar U_1$ such that $I_{a,f}(u_n)\to c_0$.
Since Lemma \ref{l:30-7-1} implies that $c_0<c_1$,  without restriction we can assume $(u_n)_n\subset U_1$. Further, using Ekeland's variational principle from $(u_n)_n $, we can extract a $(PS)$ sequence in $U_1$ for $I_{a,f}$ at level $c_0$. We again call it by $(u_n)_n $. Moreover, as $I_{a,f}(u)\geq \tilde J(u)$, from \eqref{31-7-3} it follows that $(u_n)_n $ is a bounded sequence. Therefore,  up to a subsequence $u_n\rightharpoonup u_0$ in $\Hs$ and $u_n\to u_0$ a.e. in $\Rn$. In particular, $ (u_n)_+\to (u_0)_+$ and $ (u_n)_-\to (u_0)_-$ a.e. in $\Rn$. Moreover, as $f$ is a nonnegative functional, we have
\Bea
o(1)&=&\prescript{}{\hms}\langle\bar I'_{a,f}(u_n), (u_n)_-\rangle_{\dot{H}^s}\\
&=&\langle u_n, (u_n)_-\rangle_{\Hs}-\int_{\Rn}a(x)(u_n)_+^{2^*_s-1}(u_n)_--\prescript{}{\hms}\langle\bar f, (u_n)_-\rangle_{\dot{H}^s}\\
&\leq&-\|(u_n)_-\|_{\Hs}^2-\iint_{\R^{2N}}\frac{(u_n)_-(x)(u_n)_+(y)+(u_n)_+(x)(u_n)_-(y)}{|x-y|^{N+2s}}dxdy\\
&\leq&-\|(u_n)_-\|_{\Hs}^2.
\Eea
Therefore, $(u_n)_-$ strongly converges to $0$ in $\Hs$ and so $(u_n)_-\to 0$ a.e. in $\Rn$ and also $(u_0)_-=0$ a.e. in $\Rn$. In other words, $u_0\geq 0$ a.e. in $\Rn$. Consequently, without loss of generality, we can assume that $(u_n)_n$ is a nonnegative sequence. This completes the proof of Step~2.

\vspace{2mm}

{\bf Step 3:}  In this step we show that $u_n\to u_0$ in $\Hs$ and $u_0\in U_1$.

Applying Proposition \ref{PSP}, we get
		 \be\label{J5}
		u_n-\bigg(u_0 +\sum_{j=1}^{m}a(x^j)^{-\frac{N-2s}{4s}}W^{r_n^j, x_n^j}\bigg) \To 0 \;\text{ in }\Hs.
\ee
	with $I_{a,f}'(u_0) =0$, $W$ is the unique positive solution of \eqref{AT0.8} and some appropriate sequences $(x_n^j)_n$,  $(r_n^j)_n$, with
either $x_n^j\to x^j$ or $|x_n^j|\to\infty$ and $r_n^j\to 0$. To prove Step~3, we need to show that $m=0$.
Arguing  by contradiction,
		suppose that $j\neq 0$ in \eqref{J5}. Then,
\bea\lab{12-4-3}
g\bigg(a(x^j)^{-\frac{N-2s}{4s}}W^{r_n^j, x_n^j}\bigg)&=&a(x^j)^{-\frac{N-2s}{2s}}\|W\|^2_{\Hs}-(2^*_s-1)\|a\|_{L^\infty(\Rn)}a(x^j)^{-\frac{N}{2s}}
\|W\|_{L^{2^*_s}(\R^N)}^{2^*_s}\no\\
&=&a(x^j)^{-\frac{N}{2s}}\bigg(a(x^j)-(2^*_s-1)\|a\|_{L^\infty(\Rn)}\bigg)\|W\|_{L^{2^*_s}(\R^N)}^{2^*_s}<0.
\eea				
From Proposition \ref{PSP}, we also have
		$$c_0=I_{a,f}(u_n)\rightarrow  I_{a,f}(u_0)+\sum_{j=1}^{m}a(x^j)^{-\frac{N-2s}{2s}}\bar I_{1,0}(W).$$
As $a>0$ and in force of~\eqref{barI}, from the above expression we obtain $ I_{a,f}(u_0)<c_0$. This in turn yields $u_0\not\in U_1$ and
\be\lab{12-4-4} g(u_0)\leq 0.\ee 	
	
Next, we evaluate $g\bigg(u_0 +\sum_{j=1}^{m}a(x^j)^{-\frac{N-2s}{4s}}\big(W^{r_n^j, x_n^j}\big)\bigg)$. Since $u_n\in U_1$, we have $g(u_n)\geq 0$. Therefore, the uniform continuity of $g$ and \eqref{J5}  give
		\be\label{J8}
		0\leq \liminf_{n\rightarrow\infty}g(u_n)=\liminf_{n\rightarrow\infty} g\bigg(u_0 +\sum_{j=1}^{m}a(x^j)^{-\frac{N-2s}{4s}}\big(W^{r_n^j, x_n^j}\big)\bigg).
\ee
We also note that if $u_0\neq 0$ then using Remark \ref{r:30-7-3}, we can say that $u_0$ is nonnegative. Therefore,
\bea\lab{12-4-1}
g\bigg(u_0 +\sum_{j=1}^{m}a(x^j)^{-\frac{N-2s}{4s}}\big(W^{r_n^j, x_n^j}\big)\bigg)&\leq&\|u_0\|_{\Hs}^2+\|\sum_{j=1}^{m}a(x^j)^{-\frac{N-2s}{4s}}\big(W^{r_n^j, x_n^j}\big)\|^2_{\Hs}\no\\
&&+2\big\langle u_0,\sum_{j=1}^{m}a(x^j)^{-\frac{N-2s}{4s}}\big(W^{r_n^j, x_n^j}\big) \big\rangle_{\Hs}\no\\
&&- (2^*_s-1)\|a\|_{L^\infty(\Rn)}\bigg(\int_{\Rn}|u_0|^{2^*_s}dx\no\\
&&\quad+\int_{\Rn}|\sum_{j=1}^{m}a(x^j)^{-\frac{N-2s}{4s}}\big(W^{r_n^j, x_n^j}\big)|^{2^*_s}dx\bigg)\no\\
&=&g(u_0)+g\bigg(\sum_{j=1}^{m}a(x^j)^{-\frac{N-2s}{4s}}\big(W^{r_n^j, x_n^j}\big)\bigg)\no\\
&&+2\big\langle u_0,\sum_{j=1}^{m}a(x^j)^{-\frac{N-2s}{4s}}\big(W^{r_n^j, x_n^j}\big) \big\rangle_{\Hs}
\eea
Similarly, it can be also shown that
\Bea
g\bigg(\sum_{j=1}^{m}a(x^j)^{-\frac{N-2s}{4s}}\big(W^{r_n^j, x_n^j}\big)\bigg)&\leq& \sum_{j=1}^m g\bigg(a(x^j)^{-\frac{N-2s}{4s}}\big(W^{r_n^j, x_n^j}\big)\bigg) \\
&&\quad+2\sum_{i,\, j=1}^{m}\bigg\langle a(x^i)^{-\frac{N-2s}{4s}}\big(W^{r_n^i, x_n^i}\big) ,a(x^j)^{-\frac{N-2s}{4s}}\big(W^{r_n^j, x_n^j}\big) \bigg\rangle_{\Hs}
\Eea
Substituting this into \eqref{12-4-1} yields
\bea\lab{12-4-2}
g\bigg(u_0 +\sum_{j=1}^{m}a(x^j)^{-\frac{N-2s}{4s}}\big(W^{r_n^j, x_n^j}\big)\bigg)&\leq&g(u_0)+\sum_{j=1}^m g\bigg(a(x^j)^{-\frac{N-2s}{4s}}\big(W^{r_n^j, x_n^j}\big)\bigg)\no\\
&&\quad+2\sum_{i,\, j=1}^{m}\bigg\langle a(x^i)^{-\frac{N-2s}{4s}}\big(W^{r_n^i, x_n^i}\big) ,a(x^j)^{-\frac{N-2s}{4s}}\big(W^{r_n^j, x_n^j}\big) \bigg\rangle_{\Hs}\no\\
&&\qquad+ 2\big\langle u_0,\sum_{j=1}^{m}a(x^j)^{-\frac{N-2s}{4s}}\big(W^{r_n^j, x_n^j}\big) \big\rangle_{\Hs}.
\eea
{\bf Claim:} $$(i)\,\, \bigg\langle u_0 ,\, a(x^j)^{-\frac{N-2s}{4s}}W^{r_n^j, x_n^j} \bigg\rangle_{\Hs}=o(1).$$
$$(ii)\, \, \bigg\langle a(x^i)^{-\frac{N-2s}{4s}}W^{r_n^i, x_n^i},\, a(x^j)^{-\frac{N-2s}{4s}}W^{r_n^j, x_n^j} \bigg\rangle_{\Hs}=o(1)$$
To prove (i), first we define $u_0^n(x):=(r_n^j)^\frac{N-2s}{2}u_0(x_n^j+r_n^jx)$. As $r_n^j\to 0$ and $u_0\in\dot{H}^s(\Rn)$, it is easy to see that
$u_0^n\deb 0$ in $\Hs$. Thus,
\Bea
&&\bigg\langle u_0 ,\, a(x^j)^{-\frac{N-2s}{4s}}W^{r_n^j, x_n^j} \bigg\rangle_{\Hs}\\
&=&a(x^j)^{-\frac{N-2s}{4s}}(r_n^j)^{-\frac{N-2s}{2}}\iint_{\R^{2N}}\frac{\big(u_0(x)-u_0(y)\big)\big(W(\frac{x-x_n^j}{r_n^j})-W(\frac{y-x_n^j}{r_n^j})\big)}{|x-y|^{N+2s}}dxdy\\
&=&a(x^j)^{-\frac{N-2s}{4s}}(r_n^j)^\frac{N-2s}{2}\iint_{\R^{2N}}\frac{\big(u_0(x_n^j+r_n^jx)-u_0(x_n^j+r_n^jy)\big)\big(W(x)-W(y)\big)}{|x-y|^{N+2s}}dxdy\\
&=&a(x^j)^{-\frac{N-2s}{4s}}\big\langle u_0^n ,W \big\rangle_{\Hs}\\
& =&o(1).
\Eea

Similarly,
\Bea
&&\bigg\langle a(x^i)^{-\frac{N-2s}{4s}}\big(W^{r_n^i, x_n^i}\big) ,a(x^j)^{-\frac{N-2s}{4s}}\big(W^{r_n^j, x_n^j}\big) \bigg\rangle_{\Hs}\\
&=&a(x^i)^{-\frac{N-2s}{4s}}a(x^j)^{-\frac{N-2s}{4s}}(r_n^i)^\frac{N-2s}{2}(r_n^j)^{-\frac{N-2s}{2}}\cdot\\
&&\qquad\times \iint_{\R^{2N}}\frac{\big(W(x)-W(y)\big)\big(W(\frac{r_n^ix+x_n^i-x_n^j}{r_n^j})-W(\frac{r_n^iy+x_n^i-x_n^j}{r_n^j})\big)}{|x-y|^{N+2s}}dxdy\\
&=&a(x^i)^{-\frac{N-2s}{4s}}a(x^j)^{-\frac{N-2s}{4s}}\big\langle W, W_n\big\rangle_{\Hs},
\Eea
where $W_n:=(\frac{r_n^i}{r_n^j})^\frac{N-2s}{2}W\big(\frac{r_n^i}{r_n^j}x+\frac{x_n^i-x_n^j}{r_n^j}\big)$. Further, we observe that  using the following
$$\bigg|\log\big(\frac{r^i_k}{r_k^j}\big)\bigg|+\bigg|\frac{x_k^i-x_k^j}{r_k^i}\bigg|\To\infty$$
from Proposition \ref{PSP}, it is easy to see that $W_n\deb 0$ in $\Hs$. Hence Claim (ii) follows.

A combination of the above claim along with \eqref{12-4-3} and \eqref{12-4-4} contradicts \eqref{J8}. Therefore, $j=0$ in \eqref{J5}. Hence, $u_n\to u_0$ in $\Hs$. Consequently, $g(u_n)\to g(u_0)$, which in turn implies $u_0\in \bar U_1$. But, since $c_0<c_1$, we can conclude $u_0\in U_1$.    Thus Step 3 follows.

%		On the other hand, since $|x_n^i|\rightarrow\infty,\;|x_n^i-x_n^j|\rightarrow\infty \text{, for }1\leq i\neq j\leq m$\\
%		the supports of $u_0(\bullet)$ and $w^i(\bullet-x_n^i)$ are going increasingly far away as $n\to\infty$ and we get
%		$$\lim_{n\rightarrow\infty}g\big(u_0+\sum_{i=1}^{m}w^i(x-x_n^i)\big) = g(u_0)+\lim_{n\rightarrow\infty}\sum_{i=1}^{m}g\bigg(w^i(x-x_n^i)\bigg)=g(u_0)+\sum_{i=1}^{m}g(w^i),$$
		
\vspace{2mm}

{\bf Step 4:} From the previous steps we conclude that $I_{a,f}(u_0)=c_0$ and
$I_{a,f}'(u_0)=0$. Therefore, $u_0$ is a weak solution to \eqref{MAT2}. Combining this with Remark \ref{r:30-7-3}, we conclude the proof of the proposition.
\end{proof}

\begin{proposition}\lab{p:31-7-2}
Assume \eqref{J1.3} holds and either $a\equiv 1$ or $\|a\|_{L^\infty(\Rn)}\geq \al(N,s)$, where $\al(N,s)$ is the second zero of the function \eqref{12-8-3}. Then $I_{a,f}$ has a second critical point $v_0\neq u_0$. In particular, $v_0$ is a positive solution to \eqref{MAT1}.
\end{proposition}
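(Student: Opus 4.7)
The strategy is a mountain--pass construction based at $u_0$. Since $I_{a,f}(u_0) = c_0 < c_1 = \inf_U I_{a,f}$ by Lemma \ref{l:30-7-1} and $I_{a,f}$ is unbounded below along rays into $U_2$, one obtains a saddle level $\ga \geq c_1 > c_0$. The crux is to show that $\ga$ lies strictly below the first compactness threshold provided by Proposition \ref{PSP}, namely
\[
\ga < c_0 + \frac{s}{N}\,\|a\|_{L^\infty(\Rn)}^{-(N-2s)/(2s)}\,S^{N/(2s)}. \qquad (\ast)
\]

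To set up the mountain pass, pick $y \in \Rn$ with $a(y) = \|a\|_{L^\infty(\Rn)}$ (which exists since $a$ is continuous, $a \geq 1$, and $a(x) \to 1$ at infinity; when $a \equiv 1$ any $y$ works). Set $W_{\eps,y}(x) := \eps^{-(N-2s)/2} W\bigl((x-y)/\eps\bigr)$ and choose $T > 0$ so large that $I_{a,f}(u_0 + TW_{\eps,y}) < c_0$ and $u_0 + TW_{\eps,y} \in U_2$; both follow from the fact that $I_{a,f}$ and $g$ tend to $-\infty$ along $t \mapsto u_0 + tW_{\eps,y}$. Define
\[
\Gamma := \{\sigma \in C([0,1],\Hs) : \sigma(0) = u_0,\ \sigma(1) = u_0 + TW_{\eps,y}\}, \qquad \ga := \inf_{\sigma \in \Gamma}\max_{t \in [0,1]} I_{a,f}(\sigma(t)).
\]
By continuity of $g$, every $\sigma \in \Gamma$ meets $U$, so $\ga \geq c_1 > c_0$. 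To prove $(\ast)$ one tests $\Gamma$ with the straight line $t \mapsto u_0 + tTW_{\eps,y}$, reducing matters to bounding $\sup_{\tau \geq 0} I_{a,f}(u_0 + \tau W_{\eps,y})$ for $\eps > 0$ small. Using $I'_{a,f}(u_0) = 0$, the Aubin--Talenti identities $\|W_{\eps,y}\|^2_{\dot H^s} = \|W\|_{L^{2_s^*}(\Rn)}^{2_s^*} = S^{N/(2s)}$, and the concentration $\int a\,W_{\eps,y}^{2_s^*} \to \|a\|_\infty S^{N/(2s)}$ as $\eps \to 0$, the supremum collapses onto an explicit one--dimensional optimization. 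For $a \equiv 1$, the positive cross interaction $\int u_0^{2_s^*-1} W_{\eps,y}$ (of order $\eps^{(N-2s)/2}$) strictly lowers the supremum below $\frac{s}{N}S^{N/(2s)}$, yielding $(\ast)$. When $a \not\equiv 1$, the optimized profile, after rescaling, is governed by the function $\va$ in \eqref{12-8-3}, and the hypothesis $\|a\|_{L^\infty(\Rn)} \geq \al(N,s)$ (the second zero of $\va$) is precisely the threshold ensuring the rescaled supremum lies strictly below the compactness level.

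Granted $(\ast)$, the mountain--pass theorem furnishes a $(PS)_\ga$ sequence $(v_n) \subset \Hs$ for $I_{a,f}$. As in Step~2 of the proof of Proposition \ref{p:30-7-1}, the negative parts $(v_n)_{-}$ tend to $0$ strongly, so we may assume $v_n \geq 0$. Applying (the proof of) Proposition \ref{PSP} gives
\[
v_n = \bar v + \sum_{j=1}^m a(x^j)^{-(N-2s)/(4s)} W^{r_n^j, x_n^j} + o(1), \qquad \ga = I_{a,f}(\bar v) + \sum_{j=1}^m a(x^j)^{-(N-2s)/(2s)}\,\tfrac{s}{N}\,S^{N/(2s)},
\]
where $\bar v \geq 0$ is a weak solution of \eqref{MAT1}; in particular $\bar v \not\equiv 0$, since otherwise the limit equation would force $f \equiv 0$. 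If $\bar v = u_0$, then $\ga > c_0 = I_{a,f}(\bar v)$ forces $m \geq 1$, giving $\ga \geq c_0 + \frac{s}{N}\|a\|_\infty^{-(N-2s)/(2s)} S^{N/(2s)}$, which contradicts $(\ast)$. Hence $\bar v \neq u_0$, and $v_0 := \bar v$ is a nonzero critical point of $I_{a,f}$ distinct from $u_0$; by Remark \ref{r:30-7-3} and the strong maximum principle, $v_0 > 0$ is a positive solution of \eqref{MAT1}.

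The main obstacle is the strict estimate $(\ast)$. The cross interaction between $u_0$ and the concentrating bubble $W_{\eps,y}$ does not vanish as $\eps \to 0^+$, and when $a \not\equiv 1$ the concentration point cannot be chosen freely (it is forced onto a maximum of $a$), so the additional room required to beat the compactness threshold is exactly the quantitative content provided by the second-zero condition on $\va$.
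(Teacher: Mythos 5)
Your overall scheme—mountain pass between $u_0\in U_1$ and a point of $U_2$, the bound $\ga\ge c_1>c_0$, and then Proposition \ref{PSP} to exclude bubbling at level $\ga$—is sound and parallels the paper. The gap is that the strict level estimate $(\ast)$, which is the entire analytic content of the proposition, is asserted rather than proved, and the mechanism you sketch does not deliver it when $\|a\|_{L^\infty(\Rn)}>1$. Along your ray $\tau\mapsto u_0+\tau W_{\eps,y}$, using $I'_{a,f}(u_0)=0$ and the pointwise superadditivity of $r\mapsto r^{2^*_s}$ one gets $I_{a,f}(u_0+\tau W_{\eps,y})\le c_0+\tfrac{\tau^2}{2}S^{N/(2s)}-\tfrac{\tau^{2^*_s}}{2^*_s}\int_{\Rn}a\,W_{\eps,y}^{2^*_s}\,dx$, and the fibering maximum of the right-hand side equals $\tfrac{s}{N}\,S^{N/(2s)}\bigl(S^{N/(2s)}/\int_{\Rn}aW_{\eps,y}^{2^*_s}\bigr)^{(N-2s)/(2s)}$, which is $\ge \tfrac{s}{N}\|a\|_{L^\infty(\Rn)}^{-(N-2s)/(2s)}S^{N/(2s)}$ because $\int_{\Rn}aW_{\eps,y}^{2^*_s}\le\|a\|_{L^\infty(\Rn)}S^{N/(2s)}$. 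So without the interaction terms you land \emph{at or above} the compactness threshold, with excess of order $D_\eps:=\int_{\Rn}(a(y)-a)W_{\eps,y}^{2^*_s}\,dx\ge0$; to get strictly below it you must beat $D_\eps$ by the interaction with $u_0$, which is only $O(\eps^{(N-2s)/2})$ (it does vanish as $\eps\to0$, contrary to your closing remark). Under assumption ${\bf (A)}$ the weight $a$ is merely continuous, so $D_\eps\to0$ with no rate and can dominate $\eps^{(N-2s)/2}$ (e.g. $a=\|a\|_{L^\infty(\Rn)}-1/|\log|x-y||$ near $y$). Hence your concentration argument works only for $a\equiv1$ (or under extra flatness of $a$ at its maximum), and the final sentence—that $\|a\|_{L^\infty(\Rn)}\ge\al(N,s)$ is ``precisely the threshold'' making the rescaled supremum subcritical—is never derived: nothing in your computation produces the function $\varphi$ of \eqref{12-8-3}, and it is unclear how a purely pointwise condition on $\|a\|_{L^\infty(\Rn)}$ could compensate an error term governed by the local modulus of continuity of $a$ at its maximum point.

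This is also where you genuinely diverge from the paper, which uses no concentrating (Aubin--Talenti normalized) bubbles and no $\eps\to0$ asymptotics. There the test family is the fixed-amplitude, varying-dilation family $a(\bar x_0)^{-\frac{N-2s}{4s}}w_t$ with $w_t=W(\cdot/t)$; the strict inequality $I_{a,f}\bigl(u_0+a(\bar x_0)^{-\frac{N-2s}{4s}}w_t\bigr)<I_{a,f}(u_0)+I_{1,0}\bigl(a(\bar x_0)^{-\frac{N-2s}{4s}}w_t\bigr)$ is proved for \emph{every} $t>0$ using only $I'_{a,f}(u_0)=0$, $u_0>0$, $a\ge1$ and superadditivity; and the hypothesis ``$a\equiv1$ or $\|a\|_{L^\infty(\Rn)}\ge\al(N,s)$'' enters solely in the subsequent algebraic comparison (Claim~4) of the one-dimensional maximum along that family with $a(\bar x_0)^{-\frac{N-2s}{2s}}I_{1,0}(W)$, which is exactly where $\varphi$ and its second zero appear. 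To complete your proof you would either have to carry out that fixed-amplitude optimization (as in the paper) or add a quantitative flatness assumption on $a$ at its maximum; as written, the case $a\not\equiv1$ is not proved.
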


\begin{proof}
Let $u_0$ be the critical point obtained in Proposition \ref{p:30-7-1} and $W$ be the unique
positive solution of \eqref{AT0.8}. Set, $w_t(x):=W\big(\frac{x}{t}\big)$ and let $\bar x_0\in\Rn$ such that $a(\bar x_0)=\|a\|_{L^\infty(\Rn)}$.

{\bf Claim 1:} $u_0+a(\bar x_0)^{-\frac{N-2s}{4s}}w_t\in U_2$ for $t>0$ large enough.

Indeed, as $\|a\|_{L^\infty(\Rn)}\geq 1$ and $u_0,\, w_t>0$, using Young inequality with $\eps>0$, we obtain
\Bea
g(u_0+a(\bar x_0)^{-\frac{N-2s}{4s}}w_t)&\leq&\|u_0\|^2_{\Hs}+a(\bar x_0)^{-\frac{N-2s}{2s}}\|w_t\|^2_{\Hs}+2a(\bar x_0)^{-\frac{N-2s}{4s}}\<u_0,w_t\>_{\Hs} \\
&&\quad-(2^*_s-1)(\|u_0\|^{2^*_s}_{L^{2^*_s}(\Rn)}+a(\bar x_0)^{-\frac{N}{2s}}\|w_t\|^{2^*_s}_{L^{2^*_s}(\Rn)})\\
&\leq&(1+\eps)a(\bar x_0)^{-\frac{N-2s}{2s}}\|w_t\|^2_{\Hs}+(1+C(\eps))\|u_0\|^2_{\Hs}\\
&&\quad-(2^*_s-1)(\|u_0\|^{2^*_s}_{L^{2^*_s}(\Rn)}+a(\bar x_0)^{-\frac{N}{2s}}\|w_t\|^{2^*_s}_{L^{2^*_s}(\Rn)})\\
&=&(1+C(\eps))\|u_0\|^2_{\Hs}-(2^*_s-1) \|u_0\|^{2^*_s}_{L^{2^*_s}(\Rn)}\\
&&\quad+\|W\|_{\Hs}^2\big[(1+\eps)a(\bar x_0)^{-\frac{N-2s}{2s}}t^{N-2s}-(2^*_s-1)a(\bar x_0)^{-\frac{N}{2s}}t^N\big]\\
&\leq& 0 \quad\mbox{for } t>0 \mbox{  large enough}.
\Eea
Therefore, $g(u_0+a(\bar x_0)^{-\frac{N-2s}{4s}}w_t)<0$ for $t$  large enough. Hence the claim follows.

\vspace{2mm}

{\bf Claim 2:} $I_{a,f}\bigg(u_0+a(\bar x_0)^{-\frac{N-2s}{4s}}w_t\bigg)<I_{a,f}(u_0)+I_{1,0}\bigg(a(\bar x_0)^{-\frac{N-2s}{4s}}w_t\bigg),\;\forall\, t>0 $.

Indeed, since $u_0, \, w_t>0$, taking $a(\bar x_0)^{-\frac{N-2s}{4s}}w_t$ as the test function for \eqref{MAT2} yields $$ {\langle} u_0,\;a(\bar x_0)^{-\frac{N-2s}{4s}}w_t{\rangle}_{\Hs} =\int_{\Rn}a(x)a(\bar x_0)^{-\frac{N-2s}{4s}}u_0^{2^*_s-1}w_t\, {\rm d}x+a(\bar x_0)^{-\frac{N-2s}{4s}}\prescript{}{\hms}{\langle}f,w_t{\rangle}_{\dot{H}^s}.$$
Consequently, using the above expression and the fact that $a\geq 1$, we obtain
\Bea
I_{a,f}\big(u_0+a(\bar x_0)^{-\frac{N-2s}{4s}}w_t\big)
&=& \frac{1}{2}\|u_0\|_{\Hs}^2+\frac{a(\bar x_0)^{-\frac{N-2s}{2s}}}{2}\|w_t\|_{\Hs}^2+ a(\bar x_0)^{-\frac{N-2s}{4s}}{\langle} u_0,\;w_t{\rangle}_{\Hs} \\
&&\qquad-\frac{1}{2^*_s}\int_{\Rn}a(x)(u_0+a(\bar x_0)^{-\frac{N-2s}{4s}}w_t)^{2^*_s}\;{\rm d}x-\prescript{}{\hms}{\langle}f,u_0{\rangle}_{\dot{H}^s}\\
&&\qquad\qquad-a(\bar x_0)^{-\frac{N-2s}{4s}}\prescript{}{\hms}{\langle}f,w_t{\rangle}_{\dot{H}^s}\\
&=&I_{a,f}(u_0)+I_{1,0}\big(a(\bar x_0)^{-\frac{N-2s}{4s}}w_t\big) +a(\bar x_0)^{-\frac{N-2s}{4s}}\langle u_0,\;w_t\rangle_{\Hs}\\
&&\quad+\frac{1}{2^*_s}\int_{\Rn}a(x)u_0^{2^*_s}\;{\rm d}x
+\frac{a(\bar x_0)^{-\frac{N}{2s}}}{2^*_s}\int_{\Rn}w_t^{2^*_s}\;{\rm d}x\\
&&\quad-\frac{1}{2^*_s}\int_{\Rn}a(x)(u_0+a(\bar x_0)^{-\frac{N-2s}{4s}}w_t)^{2^*_s}\;{\rm d}x-a(\bar x_0)^{-\frac{N-2s}{4s}}\prescript{}{\hms}{\langle}f,w_t{\rangle}_{\dot{H}^s}\\
&\leq&I_{a,f}(u_0)+I_{1,0}\big(a(\bar x_0)^{-\frac{N-2s}{4s}}w_t\big) +\frac{1}{2^*_s}\int_{\Rn}a(x)\bigg[2^*_sa(\bar x_0)^{-\frac{N-2s}{4s}}u_0^{2^*_s-1}w_t\\
&&\qquad\qquad+|u_0|^{2^*_s}+a(\bar x_0)^{-\frac{N}{2s}}w_t^{2^*_s}-(u_0+a(\bar x_0)^{-\frac{N-2s}{4s}}w_t)^{2^*_s}\bigg]{\rm d}x\\
&<&I_{a,f}(u_0)+I_{1,0}\big(a(\bar x_0)^{-\frac{N-2s}{4s}}w_t\big).
\Eea
Hence the Claim follows.

\vspace{2mm}

A direct computation shows that
\be\lab{1-8-2}
\lim_{t\to\infty}I_{1,0}\big(a(\bar x_0)^{-\frac{N-2s}{4s}}w_t\big)=-\infty
%\frac{t^{N-2s}}{2}\|W\|_{H^s}^2-\frac{t^N}{2^*_s}\|W\|_{L^{2^*_s}(\Rn)}^{2^*_s}\to -\infty \quad\text{as}\quad t\to\infty,
\ee
From \eqref{1-8-2} and using the relation
$$\|w_t\|_{\Hs}^2=t^{N-2s}\|W\|_{\Hs}^2, \quad \|w_t\|_{L^{2^*_s}(\Rn)}^{2^*_s}=t^{N}\|W\|_{\Hs}^2,$$ a straight forward computation yields that
$$
\sup_{t>0} I_{1,0}\big(a(\bar x_0)^{-\frac{N-2s}{4s}}w_t\big)=I_{1,0}\big(a(\bar x_0)^{-\frac{N-2s}{4s}}w_{t_{max}}\big), \quad\mbox{where } t_{max}=a(\bar x_0)^{-\frac{1}{2s}}.
$$
Therefore, substituting the value of $t_{max}$ in the definition of $I_{a,f}$, it is not difficult to check that
$$\sup_{t>0} I_{1,0}\big(a(\bar x_0)^{-\frac{N-2s}{4s}}w_t\big)=a(\bar x_0)^{-\frac{N}{s}}\bigg(\frac{a(\bar x_0)^2}{2}-\frac{1}{2^*_s}\bigg)\|W\|^2_{\Hs}.$$
Combining this with Claim 2 and \eqref{1-8-2} yields
\be\begin{split}\lab{1-8-6}
I_{a,f}(u_0+a(\bar x_0)^{-\frac{N-2s}{4s}}w_t)<I_{a,f}(u_0)+a(\bar x_0)^{-\frac{N}{s}}\bigg(\frac{a(\bar x_0)^2}{2}-\frac{1}{2^*_s}\bigg)\|W\|^2_{\Hs}  \quad\forall\, t>0 \\
\mbox{ and }\qquad I_{a,f}(u_0+a(\bar x_0)^{-\frac{N-2s}{4s}}w_t)<I_{a,f}(u_0), \quad\mbox{for } t \mbox{ large enough}.
\end{split}
\ee
 Fix $t_0>0$ large enough such that \eqref{1-8-6} and Claim 1 are satisfied.
Next, we set
$$\ga:=\inf_{i\in\Ga}\max_{t\in[0,1]} I_{a,f}\big(i(t)\big),$$
where $$\Ga:=\{i\in C\big([0,1], \Hs\big) : i(0)=u_0,\quad i(1)= u_0+a(\bar x_0)^{-\frac{N-2s}{4s}}w_{t_0}\}.$$
As $u_0\in U_1$ and $u_0+a(\bar x_0)^{-\frac{N-2s}{4s}}w_{t_0}\in U_2$, for every $i\in \Ga$, there exists $t_i\in(0,1)$
such that $i(t_i)\in U$. Therefore, $$\max_{t\in[0,1]} I_{a,f}(i(t))\geq I_{a,f}\big(i(t_i)\big)\geq \inf_{U}I_{a,f}(u)=c_1.$$
Thus, $\ga\geq c_1>c_0=I_{a,f}(u_0)$.
Here in the last inequality we have used Lemma \ref{l:30-7-1}.

\vspace{2mm}

{\bf Claim 3:} $\ga<I_{a,f}(u_0)+a(\bar x_0)^{-\frac{N}{s}}\bigg(\frac{a(\bar x_0)^2}{2}-\frac{1}{2^*_s}\bigg)\|W\|^2_{\Hs}$.

It is easy to see that  $\lim_{t\to 0}\|w_t\|_{\Hs}=0$. Thus, if we define
$\tilde i(t)=u_0+a(\bar x_0)^{-\frac{N-2s}{4s}}w_{tt_0}$, then $\lim_{t\to 0}\|\tilde i(t)-u_0\|_{\Hs}=0$. Consequently, $\tilde i\in \Ga$. Therefore, using \eqref{1-8-6}, we obtain
$$\ga\leq \max_{t\in[0,1]}I_{a,f}(\tilde i(t))=\max_{t\in[0,1]}I_{a,f}(u_0+a(\bar x_0)^{-\frac{N-2s}{4s}}w_{tt_0})<I_{a,f}(u_0)+a(\bar x_0)^{-\frac{N}{s}}\bigg(\frac{a(\bar x_0)^2}{2}-\frac{1}{2^*_s}\bigg)\|W\|^2_{\Hs}.$$
Thus the claim follows.

Hence \be\lab{12-8-2}I_{a,f}(u_0)<\ga<I_{a,f}(u_0)+a(\bar x_0)^{-\frac{N}{s}}\bigg(\frac{a(\bar x_0)^2}{2}-\frac{1}{2^*_s}\bigg)\|W\|^2_{\Hs}.\ee

{\bf Claim 4:} $a(\bar x_0)^{-\frac{N}{s}}\bigg(\frac{a(\bar x_0)^2}{2}-\frac{1}{2^*_s}\bigg)\|W\|^2_{\Hs}\leq a(\bar x_0)^{-\frac{N-2s}{2s}}I_{1,0}(W)$. % for all $x\in\Rn$.

Since $I_{1,0}(W)=\frac{s}{N}\|W\|^2_{\Hs}$, we observe that
\bea\lab{12-8-1}
&&a(\bar x_0)^{-\frac{N}{s}}\bigg(\frac{a(\bar x_0)^2}{2}-\frac{1}{2^*_s}\bigg)\|W\|^2_{\Hs}\leq a(\bar x_0)^{-\frac{N-2s}{2s}}I_{1,0}(W)\no\\
%&& \Longleftrightarrow \frac{a(x_0)^2}{2}-\frac{1}{2^*_s} \leq \frac{s}{N}a(x_0)^\frac{N+2s}{2s}\\
&&\text{if and only if}\,\, \frac{s}{N}a(\bar x_0)^\frac{N+2s}{2s}-\frac{a(\bar x_0)^2}{2}+\frac{1}{2^*_s}\geq 0.
\eea
Define $$\varphi(t):=\frac{s}{N}t^\frac{N+2s}{2s}-\frac{t^2}{2}+\frac{1}{2^*_s}.$$
Then an easy analysis shows that $\varphi(1)=0$ and there exists $\al(N,s)>1$ such that $\varphi(t)>0$ for all $t>\al(N,s)$, $\varphi(t)<0$ for $t\in(1, \al(N,s))$.

Therefore, if $a(\bar x_0)=1$ (which is equivalent to $a\equiv 1$) or $\|a\|_{L^\infty(\Rn)}\geq \al(N,s)$ then \eqref{12-8-1} holds. Hence, using the hypothesis of Proposition \ref{p:31-7-2}, we have
$$a(\bar x_0)^{-\frac{N}{s}}\bigg(\frac{a(\bar x_0)^2}{2}-\frac{1}{2^*_s}\bigg)\|W\|^2_{\Hs}\leq a(\bar x_0)^{-\frac{N-2s}{2s}}I_{1,0}(W)\leq a(x)^{-\frac{N-2s}{2s}}I_{1,0}(W),$$
for all $x\in\Rn$. Substituting this into \eqref{12-8-2}, yields
$$I_{a,f}(u_0)<\ga<I_{a,f}(u_0)+a(x)^{-\frac{N-2s}{2s}}I_{1,0}(W) \quad\mbox{for all } x\in\Rn.$$
Using Ekeland's variational principle, there exists a $(PS)$ sequence $(u_n )_n$
for $I_{a,f}$ at level $\ga$. Doing a standard computation yields $(u_n)_n$ is bounded sequence. Further as, $\ga<I_{a,f}(u_0)+a(x)^{-\frac{N-2s}{2s}}I_{1,0}(W)$ (for any $x\in\Rn$),  from Proposition \ref{PSP} we can conclude that $u_n\to v_0$, for some $v_0\in \Hs$ such that $I_{a,f}'(v_0)=0$ and $I_{a,f}(v_0)=\ga$. Further, as
$I_{a,f}(u_0)<\ga$, we conclude $v_0\neq u_0$.

$I_{a,f}'(v_0)=0\Longrightarrow v_0$ is a weak solution to \eqref{MAT2}. Combining this with Remark~\ref{r:30-7-3}, we conclude the proof of the proposition.
\end{proof}

\begin{lemma}\lab{l:J1.3}
If $\|f\|_{\hms}<C_0S^\frac{N}{4s}$, then \eqref{J1.3} holds.
\end{lemma}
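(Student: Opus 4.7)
The plan is a direct estimate of the functional inside the infimum, using only the Sobolev inequality, the definition of the dual norm, and the strict inequality assumed on $\|f\|_{\hms}$.

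First, I would bound the duality pairing from above by the dual-norm inequality:
$$\prescript{}{\hms}{\langle}f,u{\rangle}_{\dot H^s}\;\le\;\|f\|_{\hms}\,\|u\|_{\dot H^s(\Rn)}.$$
Thus for every $u\in\Hs$ with $\|u\|_{L^{2^*_s}(\Rn)}=1$,
$$C_0\|u\|_{\dot H^s(\Rn)}^{\frac{N+2s}{2s}}-\prescript{}{\hms}{\langle}f,u{\rangle}_{\dot H^s}\;\ge\;\|u\|_{\dot H^s(\Rn)}\Bigl(C_0\|u\|_{\dot H^s(\Rn)}^{\frac{N}{2s}}-\|f\|_{\hms}\Bigr),$$
where I used $\tfrac{N+2s}{2s}-1=\tfrac{N}{2s}$.

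Next I would invoke the definition of $S$ in \eqref{17-7-5}, which gives $\|u\|_{\dot H^s(\Rn)}^2\ge S\,\|u\|_{L^{2^*_s}(\Rn)}^2=S$ whenever $\|u\|_{L^{2^*_s}(\Rn)}=1$; equivalently $\|u\|_{\dot H^s(\Rn)}\ge S^{1/2}$. Plugging this lower bound into both factors on the right,
$$C_0\|u\|_{\dot H^s(\Rn)}^{\frac{N+2s}{2s}}-\prescript{}{\hms}{\langle}f,u{\rangle}_{\dot H^s}\;\ge\;S^{1/2}\Bigl(C_0 S^{\frac{N}{4s}}-\|f\|_{\hms}\Bigr).$$
Since by hypothesis $\|f\|_{\hms}<C_0 S^{N/4s}$, the right-hand side is a strictly positive constant, independent of $u$. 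Taking the infimum over all admissible $u$ yields
$$\inf_{u\in\Hs,\,\|u\|_{L^{2^*_s}(\Rn)}=1}\Bigl\{C_0\|u\|_{\dot H^s(\Rn)}^{\frac{N+2s}{2s}}-\prescript{}{\hms}{\langle}f,u{\rangle}_{\dot H^s}\Bigr\}\;\ge\;S^{1/2}\Bigl(C_0 S^{\frac{N}{4s}}-\|f\|_{\hms}\Bigr)\;>\;0,$$
which is precisely \eqref{J1.3}. There is no real obstacle here: the only substantive ingredients are the trivial dual pairing bound and the sharp Sobolev embedding constant $S$; the exponent arithmetic $\tfrac{N+2s}{2s}-1=\tfrac{N}{2s}$ is exactly what makes the threshold $C_0 S^{N/4s}$ appear naturally.
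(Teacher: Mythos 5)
Your proof is correct. The key step --- plugging the lower bound $\|u\|_{\dot H^s}\ge S^{1/2}$ into both factors of $\|u\|_{\dot H^s}\bigl(C_0\|u\|_{\dot H^s}^{N/2s}-\|f\|_{\hms}\bigr)$ --- is legitimate because, under the hypothesis $\|f\|_{\hms}<C_0S^{N/4s}$, both factors are nonnegative and nondecreasing in $\|u\|_{\dot H^s}$ on the range $[S^{1/2},\infty)$; you might spell that monotonicity out in a sentence, but there is no gap. The ingredients you use (the duality pairing bound, the sharp Sobolev constant $S$, and the exponent identity $\tfrac{N+2s}{2s}-1=\tfrac{N}{2s}$) are exactly the ones the paper uses, so the argument is essentially the same in spirit. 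The difference is organisational: the paper works on the set $U=\{g=0\}$ rather than on the $L^{2^*_s}$-normalised sphere, invoking its Lemma \ref{l:1-8-1} (the Sobolev lower bound $\tfrac{4s}{N+2s}\|u\|_{\Hs}\ge C_0S^{N/4s}$ on $U$) and Remark \ref{r:30-7-1}, then transfers the positivity back to the $L^{2^*_s}$-sphere through a chain of equivalences based on homogeneity of degree one. Your version bypasses $U$ entirely, stays on the sphere appearing in \eqref{J1.3}, and produces the explicit lower bound $S^{1/2}\bigl(C_0S^{N/4s}-\|f\|_{\hms}\bigr)$ for the infimum, which is a small gain in transparency; the paper's detour, on the other hand, reuses machinery (Lemma \ref{l:1-8-1}, Remark \ref{r:30-7-1}, the set $U$) that is already in play for Lemma \ref{l:30-7-1} and the rest of Section 3.
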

\begin{proof}
Using the given hypothesis, we can obtain $\eps>0$ such that $\|f\|_{\hms}<C_0S^\frac{N}{4s}-\eps$. Therefore, using Lemma \ref{l:1-8-1}, we have
$$ \prescript{}{\hms}{\langle}f, u{\rangle}_{\dot{H}^s} \leq \|f\|_{\hms}\|u\|_{\Hs}<\big[C_0S^\frac{N}{4s}-\eps\big]\|u\|_{\Hs}\leq \frac{4s}{N+2s}\|u\|_{\Hs}^2-\eps\|u\|_{\Hs},$$
for all $u\in U$. Therefore,
$$\frac{4s}{N+2s}\|u\|_{\Hs}^2-\prescript{}{\hms}{\langle}f,u{\rangle}_{\dot{H}^s}> \eps\|u\|_{\Hs}$$
for all  $u\in U$, i.e., $$\inf_{U}\bigg[\frac{4s}{N+2s}\|u\|_{\Hs}^2-\prescript{}{\hms}{\langle}f,u{\rangle}_{\dot{H}^s}\bigg]\geq \eps\inf_{U}\|u\|_{\Hs}.$$
Since, by Remark \ref{r:30-7-1}, we have $\|u\|_{\Hs}$ is bounded away from $0$ on $U$,  the above expression implies
\be\lab{1-8-1}\inf_{U}\bigg[\frac{4s}{N+2s}\|u\|_{\Hs}^2-\prescript{}{\hms}{\langle}f,u{\rangle}_{\dot{H}^s}\bigg]>0.\ee
On the other hand,
\bea\lab{1-8-2'}
\eqref{J1.3}&\iff& C_0\frac{\|u\|_{\Hs}^{\tfrac{N+2s}{2s}}}{\|u\|_{L^{2^*}(\Rn)}^{\tfrac{N}{2s}}} - \prescript{}{\hms}{\langle}f,u{\rangle}_{\dot{H}^s} >0\quad \text{for}\quad \|u\|_{L^{2^*_s}(\Rn)}=1\no\\
		&\iff& C_0\frac{\|u\|_{\Hs}^{\tfrac{N+2s}{2s}}}{\|u\|_{L^{2^*}(\Rn)}^{\tfrac{N}{2s}}} - \prescript{}{\hms}{\langle}f,u{\rangle}_{\dot{H}^s}  >0\quad \text{for}\quad u\in U\no\\
		&\iff& \frac{4s}{N+2s}\|u\|_{\Hs}^2 -\prescript{}{H^{-s}}{\langle}f,u{\rangle}_{H^s}> 0 \quad\text{for }\quad u\in U.
		\eea
Clearly, \eqref{1-8-1} insures RHS of \eqref{1-8-2'} holds. Hence the lemma follows.
\end{proof}

\vspace{2mm}

{\bf Proof of Theorem \ref{th:ex-f} completed:}
\begin{proof}
Combining Proposition \ref{p:30-7-1} and Proposition \ref{p:31-7-2} with Lemma \ref{l:J1.3}, we conclude the proof of Theorem \ref{th:ex-f}.
\end{proof}

\section{Proof of Theorem \ref{th:2}}

\begin{proof}
We observe that
	\begin{equation}\label{esti-1}
	I''_{a,f}(u)(h,h) =  \|h\|_{\Hs}^2 -(2^*_s-1)\int_{\Rn} a(x)u_+^{2^*_s-2}h^2 \, {\rm d}x
	\end{equation}
Since $a\in L^\infty(\Rn)$, using H\"{o}lder inequality and Sobolev inequality, we estimate the second term on the RHS as follows
	
	\begin{eqnarray*}
	\int_{\Rn} a(x)u_+^{2^*_s-2}h^2 \, {\rm d}x &\leq& \|a\|_{L^\infty(\Rn)}\left( \int_{\Rn} |u|^{2^*_s} \, {\rm d}x\right)^{\frac{2s}{N}}
	\left(\int_{\Rn} |h|^{2^*_s} \, {\rm d}x \right)^{\frac{2}{2^*_s}}\\
	&\leq& \|a\|_{L^\infty(\Rn)}S^{-\frac{2^*_s}{2}} \, \|u\|^{2^*_s-2}_{\Hs} \, \|h\|^{2}_{\Hs}.
	\end{eqnarray*}		
	Thus substituting the above in \eqref{esti-1} we obtain
	
	$$ I''_{a,f}(u)(h,h) \geq \left( 1 - (2^*_s-1)\|a\|_{L^\infty(\Rn)}S^{-\frac{2^*_s}{2}} \|u\|^{2^*_s-2}_{\Hs}\right) \|h\|^{2}_{\Hs}.$$	
Therefore, 		
	$I''_{a,f}(u)$ is positive definite for $u\in B(r_1)$, with
	$r_1 =\big((2^*_s-1)\|a\|_{L^\infty(\Rn)}\big)^{-\frac{1}{2^*_s-2}}S^\frac{N}{4s}$ and hence $I_{a,f}$ is strictly convex in $B(r_1).$
	
	\medskip
	
	 For $\|u\|_{\Hs} = r_1$,
	\bea
	\Iaf &=& \tfrac{1}{2}\|u\|_{\Hs}^2 - \tfrac{1}{2^*_s}\int_{\Rn}a(x)u_+^{2^*_s}\; {\rm d}x-  \prescript{}{\hms}{\langle}f, u{\rangle}_{\dot{H}^s}\no\\
	 %&\geq& \tfrac{1}{2}r_1^2-\tfrac{1}{2^*_s} S^{-\frac{2^*_s}{2}}  r_1^{2^*_s}-r_1\|f\|_{\hms}\no\\
	&\geq&\bigg(\frac{1}{2}-\frac{1}{2^*_s}\|a\|_{L^\infty(\Rn)} S^{-\frac{2^*_s}{2}}  r_1^{2^*_s-2}\bigg) r_1^2 - r_1\|f\|_{\hms}\no
	\eea
Since, $r_1^{2^*_s-2} \, = \, \frac{1}{(2^*_s-1)\|a\|_{L^\infty(\Rn)}} S^{\frac{2^*_s}{2}}$, we obtain	
	$$
	\Iaf \geq \bigg(\frac{1}{2} -\frac{1}{2^*_s(2^*_s-1)}\bigg)r_1^2-r_1\|f\|_{\hms}.
	$$
Thus there exists $d>0$ such that
	$$
	\inf_{\|u\|_{\Hs} \, = \, r_1}\Iaf >0, \quad  \mbox{provided that } \  0<\|f\|_{\hms}\leq d.
	$$
Since $I_{a,f}$ is strictly convex in $B(r_1)$ and $\inf_{\|u\|_{\Hs} \, = \, r_1}\Iaf  >0=I_{a,f}(0)$,  there exists a unique critical point $u_0$ of $I_{a,f}$ in $B(r_1)$ and it satisfies
	\be\lab{2-9-1}
	I_{a,f}(u_0) = \inf_{\|u\|_{\Hs}<r_1}\Iaf<I_{a,f}(0)=0,
	\ee
where the last inequality is due to {the}
strict convexity of $I_{a,f}$ in $B(r_1)$.	
	Combining this with Remark~\ref{r:30-7-3}, we conclude the proof of the theorem.
\end{proof}

\appendix
\section{Morrey space}
\setcounter{equation}{0}

We recall the definition of the homogeneous Morrey spaces $L^{r,\ga}(\Rn)$ , introduced by Morrey as a
refinement homogeneous of the usual Lebesgue spaces. A measurable function $u:\Rn\to\R$
belongs to the Morrey space $L^{r,\ga}(\Rn)$, with $r\in[1,\infty)$ and $\ga\in[0,N]$ if and only if
\be\lab{12-13-1}
\|u\|_{L^{r,\ga}(\Rn)}^r:=\sup_{R>0,\, x\in\Rn} R^\ga\fint_{B(x,R)}|u|^r dy.
\ee
From the above definition, it is clear that if $\ga=N$ then $L^{r,N}(\Rn)$ coincided with usual Lebesgue space $L^r(\Rn)$ for any $r\geq 1$ and similarly $L^{r,0}(\Rn)$ coincides with $L^\infty(\Rn)$. It is interesting to note that $L^{r,\ga}$ experiences same translation and dilation invariance as in $L^{2^*_s}$ and therefore of $\Hs$ if $\frac{\ga}{r}=\frac{N-2s}{2}$. Let $(u)^{x_0,r}$ be the function defined by \eqref{12-13-3}. By change of variable
formula, one can see that the following equality holds
$$\|(u)^{x_0,r}\|_{L^{r, \frac{N-2s}{2}r}}=\|u\|_{L^{r, \frac{N-2s}{2}r}(\Rn)},$$
for any $r\in[1, 2^*_s]$. Next, we recall a result from \cite{PS-1}.

\begin{lemma}\lab{l:12-13-1}\cite[Theorem 1]{PS-1}
For any $0<s<N/2$  there exists a constant $C$ depending only on $N$ and $s$ such that, for any $2/2^*_s\leq \theta<1$ and for any $1\leq r<2^*_s$
$$\|u\|_{L^{2^*_s}(\Rn)}\leq C\|u\|_{\Hs}^\theta\|u\|^{1-\theta}_{L^{r, r(N-2s)/2}(\Rn)} \quad\mbox{for all } u\in\Hs.$$
\end{lemma}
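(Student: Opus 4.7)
The statement is a Gagliardo--Nirenberg type interpolation inequality, sharpening the critical Sobolev embedding $\Hs\hookrightarrow L^{2^*_s}(\Rn)$ by adding the weaker, scale-invariant Morrey control. The first sanity check is scale invariance: under the dilation $u_\lambda(x)=\lambda^{(N-2s)/2}u(\lambda x)$, each of $\|u\|_{\Hs}$, $\|u\|_{L^{2^*_s}(\Rn)}$ and $\|u\|_{L^{r,r(N-2s)/2}(\Rn)}$ is preserved, so the homogeneities match for every admissible $\theta$. Moreover, H\"older gives $\|u\|_{L^{r,r(N-2s)/2}(\Rn)}\leq\|u\|_{L^{2^*_s}(\Rn)}$ when $1\leq r<2^*_s$, so the Morrey norm is genuinely weaker; the inequality quantifies how much $\Hs$-regularity one must spend to upgrade it back to $L^{2^*_s}$.

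The route I would take is a Littlewood--Paley decomposition $u=\sum_{j\in\Z}\Delta_j u$ with dyadic frequency blocks $|\xi|\sim 2^j$, establishing two complementary $L^\infty$ bounds on each block. On one hand, Bernstein together with the (quasi-)localization of the Littlewood--Paley kernel at scale $2^{-j}$ and the defining Morrey bound $\fint_{B(x,2^{-j})}|u|^r\leq 2^{jr(N-2s)/2}\|u\|_{L^{r,r(N-2s)/2}}^r$ should yield
$$\|\Delta_j u\|_{L^\infty(\Rn)}\lesssim 2^{j(N-2s)/2}\|u\|_{L^{r,r(N-2s)/2}(\Rn)}.$$
On the other hand, Bernstein combined with $\|\Delta_j u\|_{L^2(\Rn)}\lesssim 2^{-js}\|u\|_{\Hs}$ gives
$$\|\Delta_j u\|_{L^\infty(\Rn)}\lesssim 2^{j(N/2-s)}\|u\|_{\Hs}.$$

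With these bounds in hand, I would split $u=u_{\leq J}+u_{>J}$ at a cutoff frequency $J$. The first bound is summable over $j\leq J$ with exponent $(N-2s)/2>0$, so $\|u_{\leq J}\|_{L^\infty}\lesssim 2^{J(N-2s)/2}\|u\|_{L^{r,r(N-2s)/2}}$. Writing $\|u\|_{L^{2^*_s}}^{2^*_s}\leq\|u_{\leq J}\|_{L^\infty}^{2^*_s-q}\,\|u\|_{L^q}^{q}+\|u_{>J}\|_{L^{2^*_s}}^{2^*_s}$ for a suitable exponent $q\in[r,2^*_s)$, controlling the tail by Sobolev $\|u_{>J}\|_{L^{2^*_s}}\lesssim\|u_{>J}\|_{\Hs}\leq\|u\|_{\Hs}$, and optimizing in $J$ should reproduce the claimed exponent $\theta\in[2/2^*_s,1)$; the lower endpoint $\theta=2/2^*_s$ is exactly where the $J$-optimization saturates and the Sobolev factor is as small as permitted.

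The main obstacle lies in making the first $L^\infty$ bound precise: the Littlewood--Paley kernels are not compactly supported, so one cannot directly identify $\Delta_j u(x)$ with an average of $u$ over $B(x,2^{-j})$. The standard workaround is to decompose the complement into dyadic annuli, use the Schwartz decay of the kernel, and sum a scale-invariant geometric series of Morrey bounds, or equivalently, to use the characterization of the Morrey norm via the nonlinear fractional maximal function $M_{r,(N-2s)/2}u(x):=\sup_{R>0}R^{-(N-2s)/2}\bigl(\fint_{B(x,R)}|u|^r\bigr)^{1/r}$ together with a Hedberg-type pointwise inequality applied to the Riesz-potential representation $u=I_s*((-\Delta)^{s/2}u)$. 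Either route is routine once set up and delivers the inequality with a constant depending only on $N$ and $s$.
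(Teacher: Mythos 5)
The paper itself does not prove this lemma: it is quoted verbatim from \cite[Theorem 1]{PS-1}, whose proof runs through a pointwise Hedberg/maximal-function estimate based on the Riesz-potential representation of $u$ --- essentially the route you only gesture at in your last sentence. Your main route is instead the Littlewood--Paley interpolation argument of G\'erard--Meyer--Oru type, and its two building blocks are sound: the scaling check, the H\"older comparison with $L^{2^*_s}$, and both dyadic estimates $\|\Delta_j u\|_{L^\infty}\lesssim 2^{j(N-2s)/2}\|u\|_{L^{r,r(N-2s)/2}(\Rn)}$ (via dyadic annuli and kernel decay) and $\|\Delta_j u\|_{L^\infty}\lesssim 2^{j(N/2-s)}\|u\|_{\Hs}$ are correct.

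The gap is in the assembly step. The proposed bound $\|u\|_{L^{2^*_s}}^{2^*_s}\leq\|u_{\leq J}\|_{L^\infty}^{2^*_s-q}\,\|u\|_{L^q}^{q}+\|u_{>J}\|_{L^{2^*_s}}^{2^*_s}$ with a single, global cutoff $J$ cannot close: first, the subcritical norm $\|u\|_{L^q}$, $q<2^*_s$, is controlled by neither $\|u\|_{\Hs}$ nor the Morrey norm (both are scale invariant and admit functions with arbitrarily large $L^q$ norm), so it is not an admissible quantity; second, the tail estimate $\|u_{>J}\|_{L^{2^*_s}}\lesssim\|u_{>J}\|_{\Hs}\leq\|u\|_{\Hs}$ carries no quantitative smallness in $J$, so ``optimizing in $J$'' returns nothing beyond the plain Sobolev inequality. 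The standard repair is to make the cutoff depend on the level in the layer-cake formula: choose $J(\lambda)$ with $2^{J(\lambda)(N-2s)/2}\sim\lambda/\|u\|_{L^{r,r(N-2s)/2}(\Rn)}$ so that $\|u_{\leq J(\lambda)}\|_{L^\infty}\leq\lambda/2$, whence $|\{|u|>\lambda\}|\leq 4\lambda^{-2}\sum_{j>J(\lambda)}\|\Delta_j u\|_{L^2}^2$; inserting this into $\|u\|_{L^{2^*_s}}^{2^*_s}=2^*_s\int_0^\infty\lambda^{2^*_s-1}|\{|u|>\lambda\}|\,{\rm d}\lambda$ and interchanging the $j$-sum with the $\lambda$-integral (for each fixed $j$ the constraint reads $\lambda\lesssim 2^{j(N-2s)/2}\|u\|_{L^{r,r(N-2s)/2}}$) yields $\|u\|_{L^{2^*_s}}^{2^*_s}\lesssim\|u\|_{\Hs}^{2}\,\|u\|_{L^{r,r(N-2s)/2}(\Rn)}^{2^*_s-2}$, i.e. the endpoint $\theta=2/2^*_s$. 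The remaining exponents $\theta\in(2/2^*_s,1)$ then follow at once from this endpoint together with $\|u\|_{L^{r,r(N-2s)/2}(\Rn)}\leq C\|u\|_{L^{2^*_s}(\Rn)}\leq CS^{-1/2}\|u\|_{\Hs}$, rather than from any saturation of the $J$-optimization. With that replacement your argument becomes a complete, legitimate alternative to the cited proof in \cite{PS-1}.
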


Note that using the H\"{o}lder inequality, we also have
$L^{2^*_s}(\Rn)\hookrightarrow L^{r, \frac{N-2s}{2}r}(\Rn)$
is continuous, i.e., there
exists a constant $C=C(N,s)$ such that
\be\lab{12-13-4}\|u\|_{L^{r, r(N-2s)/2}(\Rn)}\leq C\|u\|_{L^{2^*_s}(\Rn)} \quad\mbox{for all } u\in L^{2^*_s}(\Rn).\ee
For more details about the Morrey spaces, we refer to \cite{PS-1}.

\medskip

{\bf Acknowledgement}:  M. Bhakta wishes to express her sincere gratitude to the Dipartimento di Matematica e Informatica of
the Universit\`{a} degli Studi di Perugia, where part of this work started during a visit of her to that institution.
The research of M.~Bhakta is partially supported by the {\em SERB MATRICS grant (MTR/2017/000168)}.

P. Pucci was partly supported by the Italian MIUR project
{\em Variational methods, with applications to problems in mathematical physics and
geometry} (2015KB9WPT\_009) and is member of the {\em Gruppo Nazionale per
l'Analisi Ma\-te\-ma\-ti\-ca, la Probabilit\`a e le loro Applicazioni}
(GNAMPA) of the {\em Istituto Nazionale di Alta Matematica} (INdAM).
P. Pucci was also partly supported by of the {\em Fondo Ricerca di Base di Ateneo --
Eser\-ci\-zio 2017--2019} of the University of Perugia, named {\em PDEs and Nonlinear Analysis}.
\medskip

\end{document}